\documentclass[12pt,letterpaper,reqno]{amsart}
\usepackage[margin=1in]{geometry}
\usepackage[T1]{fontenc}
\usepackage{lmodern,amssymb,mathtools,microtype,esint}
\usepackage[hidelinks]{hyperref}
\hypersetup{pdftitle={Stability and Derivative Estimates under a Strict Sup-Slope Condition},pdfauthor={Bin Guo and Jian Song},pdfsubject={September 29, 2026: Guo-Phong zeroth-order estimate; no separate oscillation assumption}}
\newtheorem{theorem}{Theorem}[section]
\newtheorem{proposition}[theorem]{Proposition}
\newtheorem{lemma}[theorem]{Lemma}
\newtheorem{corollary}[theorem]{Corollary}
\theoremstyle{remark}
\newtheorem{remark}[theorem]{Remark}
\numberwithin{equation}{section}
\newcommand{\ddbar}{i\partial\bar\partial}
\newcommand{\ub}{\underline u}
\newcommand{\R}{\mathbb R}
\newcommand{\C}{\mathbb C}
\newcommand{\E}{\mathcal E}
\newcommand{\Id}{\mathrm{Id}}
\newcommand{\one}{\mathbf 1}
\newcommand{\Ln}{\mathcal L}
\DeclareMathOperator{\tr}{tr}
\DeclareMathOperator{\osc}{osc}
\DeclareMathOperator{\Vol}{Vol}
\DeclareMathOperator{\Rea}{Re}
\title[Stability and derivative estimates]{Stability and gradient estimates for fully nonlinear elliptic equations on hermitian manifolds}
\author{Bin Guo}
\address{Department of Mathematics \& Computer Science, Rutgers University, Newark, NJ 07102}
\email{bguo@rutgers.edu}
\author{Jian Song}
\address{Department of Mathematics, Rutgers University, Piscataway, NJ 08854}
\email{jiansong@math.rutgers.edu}
\thanks{Work supported in part by the National Science Foundation under grants  DMS-2203607,  DMS-2303508, DMS-2505575 and the Simons Foundation under grant MPS-TSM-00946730.}

\begin{document}

\begin{abstract}
The known proofs of the gradient estimate for complex Hessian equations rely on the second order estimate of Hou-Ma-Wu and on a Liouville theorem of Dinew-Ko\l odziej. In this paper, we give a direct proof of the gradient estimate for concave fully nonlinear elliptic equations on compact Hermitian manifolds, which uses neither the Hou-Ma-Wu estimate nor the Liouville theorem. Instead, the proof combines a stability estimate with a comparison argument against nearby smooth admissible functions. We also give an independent proof of the complex Hessian estimate on compact K\"ahler manifolds, based on Dirichlet Green's functions, under additional assumptions on the operator.

\end{abstract}
\maketitle

\section{Introduction}
The theory of nonlinear elliptic equations on compact complex manifolds has been shaped to a large extent by the complex Monge-Amp\`ere equation and its applications to geometry. Yau's solution of the Calabi conjecture \cite{Yau} established the existence of smooth solutions on compact K\"ahler manifolds for any prescribed positive volume form with the correct total mass. The corresponding result on compact Hermitian manifolds was obtained later by Tosatti-Weinkove \cite{TW}, where the right-hand side has to be rescaled by a constant.

For complex Hessian equations, a central difficulty is the interplay between the gradient and the second order estimates. Hou-Ma-Wu \cite{HMW} obtained a bound for the second derivatives in terms of the gradient. Dinew-Ko\l odziej \cite{DK} then established the gradient estimate by a blow-up argument, together with a Liouville theorem for bounded maximal $m$-subharmonic functions with bounded gradient, thereby completing the theory of smooth solvability on compact K\"ahler manifolds. 
Sz\'ekelyhidi \cite{Szekelyhidi} developed a general framework for fully nonlinear elliptic equations on compact Hermitian manifolds, establishing a priori $C^{2,\alpha}$ estimates under the existence of $\mathcal C$-subsolution. His proof combines a second order estimate in terms of the gradient with a blow-up argument and a Liouville theorem, extending the approach of Hou-Ma-Wu and Dinew-Ko\l odziej to a broad class of operators. As an application, he obtained solvability results for Hessian quotient equations on compact K\"ahler manifolds under a suitable subsolution hypothesis.

Very recently, Chu-Liu \cite{CL} gave a proof of the gradient estimate for complex $k$-Hessian equations on compact K\"ahler manifolds which combines a logarithmic modulus of continuity with the Hou-Ma-Wu estimate and local $W^{2,p}$ estimates. Their approach relies on stability and on auxiliary Monge-Amp\`ere equations, including the construction of Guo-Phong \cite{GP} in the boundary case. It avoids the blow-up analysis and the Liouville theorem of Dinew-Ko\l odziej \cite{DK}, and it gives explicit constants. Mostly important, their proof avoids the blow-up argument and the Liouville theorem. However, since it still depends on the Hou-Ma-Wu estimate, it requires bounds on the second derivatives of the $k$-th root of the right-hand side. 

The purpose of the present paper is to obtain the gradient estimate directly, without recourse to either the second order estimate or the Liouville theorem. We work under the strict sup-slope condition of Guo-Song \cite{GS}, with a fixed smooth admissible subsolution. In this setting, the zeroth order estimate is provided by the work of Guo-Phong on $\mathcal C$-subsolutions \cite[Theorem 3.1]{GPsub}. The main observation of this paper is that it suffices to compare a solution with a single smooth admissible function which is sufficiently close to it in $C^0$. The required distance is determined before the comparison function is chosen, and it does not depend on the derivatives of that function. The compactness needed to make this choice is then provided by stability. Once the gradient is bounded, Sz\'ekelyhidi's estimate gives the second order estimate in the usual way for non-degenerate data.

We now state our results more precisely.

Let $(X,\omega)$ be a  compact Hermitian manifold of complex dimension $n\ge2$, and let $\chi$ be a fixed smooth real $(1,1)$-form. For a real-valued smooth function $w$, we write
\[
\chi_w=\chi+\ddbar w,\qquad A_w=\omega^{-1}\cdot \chi_w,\qquad F(A)=f(\lambda(A)).
\]
Here $\lambda(A)$ denotes the eigenvalues of the Hermitian endomorphism $A$. We assume that $\Gamma$ is an open convex symmetric cone satisfying $\Gamma_n\subset\Gamma\subset\Gamma_1$, and that $f\in C^\infty(\Gamma)\cap C^0(\overline\Gamma)$ is a symmetric function satisfying
\begin{equation}\label{eq:structure}
f_i = \frac{\partial f}{\partial \lambda_i}>0,\quad f\text{ is concave},\quad f>0\text{ in }\Gamma,\quad f=0\text{ on }\partial\Gamma,
\quad \lim_{t\to\infty}f(t\lambda)=\infty\quad(\lambda\in\Gamma).
\end{equation}
A function is said to be {\em admissible} if $\lambda(A_w)\in\Gamma$ everywhere. We denote by $\E$ the set of smooth admissible functions. Note that no positivity condition is imposed on $\chi$ itself.

Following the sup-slope framework of Guo-Song \cite{GS}, we set
\begin{equation}\label{eq:finfty}
f_{\infty,i}(\mu)=\lim_{t\to\infty}f(\mu+te_i),\qquad
f_\infty(\mu)=\min_{1\le i\le n}f_{\infty,i}(\mu),
\end{equation}
where $e_i \in\mathbb R^n$ denote the standard basis vectors of $\mathbb R^n$. 
These limits may be infinite. We assume that there is a fixed smooth admissible function $\ub$ such that
\begin{equation}\label{eq:strict}
f_\infty(\lambda(A_{\ub}(x)))\ge h(x)+\delta_0,\qquad\forall x\in X
\end{equation}
for some $\delta_0>0$ and for every right-hand side under consideration. We note that $\lambda(A_{\ub})\in\Gamma$ is required here, whereas the usual notion of $\mathcal C$-subsolution allows more general functions.

Unless otherwise indicated, all integrals and $L^p$ norms are taken with respect to $dV_\omega=\omega^n$, and the norm of the gradient is defined by $|\nabla w|_\omega^2=g^{i\bar j}w_iw_{\bar j}$. 

Throughout the paper, the fixed data consist of $(X,\omega),\chi,f,\Gamma$, the smooth admissible function $\ub$, and the positive constants $H_0,\delta_0$. We consider right-hand sides $0<h\le H_0$ satisfying \eqref{eq:strict}, and whenever a norm of $u$ is involved, the solutions are normalized by $\inf_Xu=0$. As we shall recall in Section~\ref{sec:continuity}, the strict gap implies a quantitative $\mathcal C$-subsolution condition, and the estimate of Guo-Phong then bounds $\|u\|_{L^\infty}$ in terms of these data alone. In particular, no separate bound on the oscillation needs to be assumed.

A stability estimate, and its application to uniform convergence, were established by Cheng-Xu \cite[Theorem 4.1 and Lemma 4.4]{CX}. Under our stronger assumption that the common subsolution is admissible, a shorter argument based on an auxiliary Monge-Amp\`ere equation yields a modest refinement of their estimate. It is given in Proposition~\ref{prop:stability}, as part of the proof of uniform continuity in Section~\ref{sec:continuity}. Our main result is the following gradient estimate:

\begin{theorem}[Gradient estimate]\label{thm:gradient}
Fix the data above and a constant $H_1>0$. Then every smooth admissible solution of
\begin{equation}\label{eqn:1.1}
F(A_u)=h,\qquad 0<h\le H_0,\qquad \|\nabla h\|_{L^\infty}\le H_1
\end{equation}
for which \eqref{eq:strict} holds satisfies
\begin{equation}\label{eq:gradient}
\|\nabla u\|_{L^\infty}\le C(X,\omega,\chi,f,\Gamma,\ub,H_0,H_1,\delta_0).
\end{equation}
The constant does not depend on $\inf_Xh$, and only the above bound on the first derivatives of $h$ is required.
\end{theorem}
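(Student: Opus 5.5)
The plan follows the strategy announced in the introduction. We compare $u$ with a smooth admissible function $v$ that is $C^0$-close to $u$, and we fix the required closeness $\varepsilon$ before choosing $v$. The argument has four steps.

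\emph{Step 1: uniform bound and compactness.} The Guo-Phong estimate \cite[Theorem 3.1]{GPsub}, together with the quantitative $\mathcal C$-subsolution property implied by \eqref{eq:strict}, bounds $\|u\|_{L^\infty}$ under the normalization $\inf_X u=0$. Proposition~\ref{prop:stability} then provides a uniform modulus of continuity for the family $\mathcal S$ of all solutions of \eqref{eqn:1.1}. By Arzel\`a-Ascoli, $\mathcal S$ is precompact in $C^0$. Hence for every $\varepsilon>0$ there is a finite $\varepsilon$-net $\{v_1,\dots,v_N\}\subset\mathcal S\subset\E$. Each $v_k$ is smooth and admissible, and its derivatives are bounded by some finite constant $K(\varepsilon)$. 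Since $N$ depends only on $\varepsilon$ and the data, the final constant may depend on $K(\varepsilon)$, provided $\varepsilon$ itself was fixed in advance.

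\emph{Step 2: maximum principle with a comparison function.} For $u\in\mathcal S$, pick $v=v_k$ with $\|u-v\|_{L^\infty}\le\varepsilon$. Consider a quantity of the form
\[
Q=\log|\nabla(u-v)|^2_\omega+\varphi(u-v)+\psi(\ub-u)\quad\text{or}\quad Q=\log|\nabla u|^2_\omega+\varphi(u-v),
\]
where $\varphi$ is chosen on the interval $[-\varepsilon,\varepsilon]$, for instance $\varphi(s)=-A\log(1+\varepsilon-s)$ or a quadratic with large second derivative. The smallness of $\varepsilon$ lets $\varphi'^2$ dominate $|\varphi''|$ in the Błocki-type cancellation. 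At a maximum point we differentiate $F(A_u)=h$ once, which only uses $|\nabla h|\le H_1$. Concavity of $f$ disposes of the third-order terms. The Hermitian torsion terms produce $C|\nabla u|\sum F^{i\bar i}$, and these are absorbed by $\varphi''|\nabla(u-v)|^2\sum F^{i\bar i}$.

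\emph{Step 3: the two-alternative dichotomy.} The key point is to use $v$ as a local subsolution-type barrier through the linearized operator:
\[
\Ln(v-u)=F^{i\bar j}\bigl((\chi_v)_{i\bar j}-(\chi_u)_{i\bar j}\bigr).
\]
We split into the standard alternatives of Sz\'ekelyhidi's $\mathcal C$-subsolution lemma. In the first, $\Ln(\ub-u)\ge\kappa\sum F^{i\bar i}$ with $\kappa$ depending only on $\delta_0$; this gives the needed positivity, used with the $\psi$ term. In the second, some $F^{i\bar i}\ge\kappa\sum F^{k\bar k}$, and the gradient term in the direction $e_i$ controls $|\nabla u|^2$. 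In both cases every constant coming from $v$ appears only multiplied by quantities that can be beaten once $|\nabla u|\ge C K(\varepsilon)/\varepsilon$. The small oscillation of $u-v$ is what keeps $\varphi'$ large relative to $\varphi''$ and the lower-order terms. This yields $|\nabla u|\le C(\varepsilon,K(\varepsilon))$ at the maximum point, and hence everywhere, with the bound depending only on $\max_k\|v_k\|_{C^2}$.

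\emph{Step 4: choosing the constants.} The main obstacle is the circularity: the constants in Steps 2--3 must not depend on the derivatives of $v$ in any way that forces $\varepsilon$ to shrink. We therefore structure the estimate so that $\varepsilon$ is determined solely by the structural constants, namely $\kappa$ from \eqref{eq:strict}, the torsion of $\omega$, and $\|\chi\|$. The terms involving $\nabla v$ and $\nabla^2 v$ must enter only additively, as lower-order terms on the set where $|\nabla u|$ is large. If the direct computation leaves a term like $F^{i\bar i}|v_{i\bar i}|\,|\nabla u|$ that cannot be made additive, I would first try replacing $v$ by a mollified convex combination $(1-t)v+t\ub$. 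This is admissible by convexity of $\Gamma$, still $C^0$-close to $u$, and gains strict $\mathcal C$-subsolution room, which absorbs such a term into the good term $\kappa\sum F^{i\bar i}$.
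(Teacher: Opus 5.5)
Your outer architecture is the paper's: the Guo--Phong bound, $L^1$ compactness together with Proposition~\ref{prop:stability} to get $C^0$ precompactness, and then comparison with a nearby member of the family whose derivatives are finite but uncontrolled. Your finite net is equivalent to the paper's uniformly Cauchy subsequence with $v=u_N$.

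The gap is the comparison estimate itself (Proposition~\ref{prop:comparison}). Step~4 states what that estimate must look like, namely a radius $\varepsilon$ fixed by structural data only. Steps~2--3 do not produce it, and with your choices they cannot. With $\varphi(s)=-A\log(1+\varepsilon-s)$ you have $\varphi'>0$. The term $\varphi'L(u-v)$ then needs a lower bound for $L(u-v)=\operatorname{tr}(BA_u)-\operatorname{tr}(BA_v)$, i.e.\ an upper bound for $\operatorname{tr}(BA_v)$, and this costs $\varphi'\sup_X|A_v|\,\mathcal T$.

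This error carries no power of $|\nabla u|$. It has the same order as the good term $a\kappa_0\mathcal T$ in the first alternative of Lemma~\ref{lem:alternative}, where $B$ may be degenerate and $\varphi''|\partial(u-v)|_B^2$ gives nothing. So, contrary to your Step~3, it is not beaten by taking $|\nabla u|\ge CK(\varepsilon)/\varepsilon$. Instead it forces $a$ to grow with $\sup|A_v|$. Absorbing the mixed term $2\Rea\sum_iB_i\lambda_iu_i\xi_{\bar i}$ requires the $u-v$ part of the weight to have second derivative large compared with $(a+|\varphi'|)^2$ on an interval of length $2\varepsilon$. Hence $\varepsilon$ shrinks as $a$ grows (in the paper, $\eta=\beta^{-1}$ with $\beta\ge 2C_a+2$). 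This is exactly the circularity you fear, and it occurs for any weight increasing in $u-v$. Your fallback $(1-t)v+t\ub$ does not remove it: it gains $ts_*\mathcal T$ against a loss $(1-t)\sup|A_v|\,\mathcal T$, with $\sup|A_v|$ uncontrolled.

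The missing idea has two parts.

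\emph{One-sided concavity with a decreasing weight.} For every $v\in\E$, the tangent plane inequality at $A_u$ gives
\[
L(u-v)=\operatorname{tr}\bigl(B(A_u-A_v)\bigr)\le h-F(A_v)\le h,
\]
which involves no derivatives of $v$. To use an upper bound, the weight must be decreasing in $u-v$. The paper takes $\xi=a(\ub-u)+\beta w^2$ with $w=u-v-\beta^{-1}\in[-2\beta^{-1},0]$. Then $2\beta wL(u-v)\ge-4H_0$, while $L(\beta w^2)$ still contributes $2\beta|\partial(u-v)|_B^2$. The useful relation is that $\varphi''=2\beta$ is large while $|\varphi'|\le4$ stays bounded; this is the reverse of your claim that $\varphi'^2$ dominates $|\varphi''|$.

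\emph{Completing the square.} The test function is $e^\xi(1+|\nabla u|^2)$, and the pure second derivatives are kept so that the gradient cross terms go into $\sum_{i,k}B_i|u_{ki}+u_k\xi_i|^2$ (Lemma~\ref{lem:weighted}). The remaining mixed term is handled by writing $\xi_i=a(\ub_i-v_i)+(-a+2\beta w)(u_i-v_i)$, where $|{-a}+2\beta w|\le a+4$ is independent of $\beta$ (Lemma~\ref{lem:mixed}). This fixes $a$ from $\kappa_0,\tau_0,C_g,H_0,H_1$, then $\beta$ from $a$. After that, $v$ enters only through $K_v$ in the threshold $1+|\nabla u|^2\ge C'_a(1+K_v^4)$. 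That is why the bound involves $\|\nabla v\|_{L^\infty}$ and never the Hessian of $v$.

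A smaller point: the torsion term involves $u_{p\bar i}$. It is absorbed by $\sum_{i,k}B_i|u_{k\bar i}|^2$, leaving $C\mathcal T|\nabla u|^2$. It is not absorbed by $\varphi''|\partial(u-v)|_B^2$, which dominates $|\nabla(u-v)|^2\mathcal T$ only in the second alternative.
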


The key ingredient in the proof is the comparison estimate of Proposition~\ref{prop:comparison}, which is stated and proved in Section~\ref{sec:gradient}.

\begin{corollary}[Hessian estimate in the Hermitian case]\label{cor:hermitian}
Fix the data above, together with constants $h_->0$ and $H_2>0$. Then every smooth admissible solution on $(X,\omega)$ satisfying
\[
F(A_u)=h,\quad h_-\le h\le H_0,\quad \|h\|_{C^2(X)}\le H_2,\quad
\inf_Xu=0,
\]
and \eqref{eq:strict} satisfies
\begin{equation}\label{eq:hermitian}
\sup_X|\ddbar u|_\omega\le C,\qquad \|u\|_{C^{2,\alpha_0}(X)}\le C'
\end{equation}
for some $\alpha_0\in(0,1)$, where the constants depend only on the stated data. Here $\omega$ is an arbitrary Hermitian metric and $\chi$ an arbitrary smooth real $(1,1)$-form; no closedness assumption and no further restriction on the cone are needed.
\end{corollary}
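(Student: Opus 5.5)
The plan is to deduce Corollary~\ref{cor:hermitian} from Theorem~\ref{thm:gradient} and Sz\'ekelyhidi's a priori estimates. We proceed in the following order: the zeroth order bound, then the gradient bound, then the complex Hessian bound, and finally Evans--Krylov. First, by the discussion preceding Theorem~\ref{thm:gradient} (to be recalled in Section~\ref{sec:continuity}), the strict gap \eqref{eq:strict} makes $\ub$ a quantitative $\mathcal C$-subsolution for every admissible $h\le H_0$. The Guo--Phong estimate \cite[Theorem 3.1]{GPsub} therefore gives $\|u\|_{L^\infty}\le C_0$ under the normalization $\inf_Xu=0$. Second, the hypothesis $\|h\|_{C^2}\le H_2$ controls $\|\nabla h\|_{L^\infty}$, so Theorem~\ref{thm:gradient} with $H_1=H_2$ gives $\|\nabla u\|_{L^\infty}\le C_1$, with $C_1$ depending only on the data.

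Third, we invoke Sz\'ekelyhidi's second order estimate \cite{Szekelyhidi}. On a compact Hermitian manifold, for concave $f$ satisfying \eqref{eq:structure} and for a $\mathcal C$-subsolution, it bounds $\sup_X|\ddbar u|_\omega\le C(1+\sup_X|\nabla u|^2)$. The constant depends on $\|u\|_{L^\infty}$, on the $\mathcal C$-subsolution data, on $\|h\|_{C^2}$, and on a positive lower bound for $h$. We must check that his standing hypotheses hold in our setting. Non-degeneracy holds because $h\ge h_->0$ and $f=0$ on $\partial\Gamma$, so $\lambda(A_u)$ stays in the compact-type region $\{f\ge h_-\}$, away from $\partial\Gamma$. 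The condition $\sup_{\partial\Gamma}f<\inf h$ holds since $\sup_{\partial\Gamma}f=0<h_-$. The quantitative $\mathcal C$-subsolution condition comes from \eqref{eq:strict}, exactly as in the first step. Neither closedness of $\omega$ or $\chi$ nor any positivity of $\chi$ enters his estimate. Substituting the bound on $\nabla u$ then gives the first inequality in \eqref{eq:hermitian}.

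Fourth, the bound on $\ddbar u$ together with the lower bound $f\ge h_-$ confines $\lambda(A_u)$ to a compact subset of $\Gamma$, so the equation is uniformly elliptic. By concavity of $F$, the Evans--Krylov theorem in its complex form (as in \cite{TW,Szekelyhidi}) applies; it requires only a $C^2$ bound on $u$ in the real sense. To get this, we note that a bound on $\Delta_\omega u$ together with $C^\alpha$ data yields $W^{2,p}$ and $C^{1,\beta}$ bounds. Alternatively, Sz\'ekelyhidi's scheme directly handles bounded complex Hessian. Evans--Krylov then yields $\|u\|_{C^{2,\alpha_0}}\le C'$.

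The main obstacle is the third step: verifying that Sz\'ekelyhidi's $C^2$ estimate is applicable with constants depending only on the stated data. This requires that the $\mathcal C$-subsolution constants derived from the gap $\delta_0$ are uniform in $h$, and that his estimate in the Hermitian, non-closed $\chi$ setting uses only $h\ge h_-$ and $\|h\|_{C^2}$. We will settle this by citing the precise form of \cite{Szekelyhidi}, together with the quantitative lemma of Section~\ref{sec:continuity}.
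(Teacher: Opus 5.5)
Your first three steps are the paper's argument. You use the Guo--Phong $L^\infty$ bound, then Theorem~\ref{thm:gradient}; strictly, $H_1=C_\omega H_2$ for the fixed norms, not $H_1=H_2$. You then apply Sz\'ekelyhidi's second order estimate, with the uniform $\mathcal C$-subsolution constants of Lemma~\ref{lem:quantC}. The paper additionally shifts the background to $\chi+\ddbar\ub$ so that $0$ is the subsolution, but that is bookkeeping your plan accommodates.

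The problem is your last step. You say the Evans--Krylov theorem needs a real $C^2$ bound, and propose to get it from the bound on $\Delta_\omega u$ via $W^{2,p}$ and $C^{1,\beta}$ estimates. Those estimates do not control the full real Hessian: a bounded Laplacian does not imply $C^{1,1}$. So this route never produces the input you said was required. Your fallback, ``Sz\'ekelyhidi's scheme directly handles bounded complex Hessian,'' points in the right direction but names no result that does it.

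The missing ingredient is a complex Evans--Krylov estimate that needs only a bound on $\ddbar u$. The paper uses \cite[Theorem 1.2]{TWWY} on a fixed finite cover by coordinate charts. Once $|\lambda(A_u)|\le R_1$ and $h_-\le f\le H_0$, the eigenvalues lie in the compact set $\mathcal K\subset\Gamma$ of \eqref{eq:spectralslab}. This uses $f=0$ on $\partial\Gamma$ and $h_->0$, and it gives $0<\vartheta\le f_i\le\Theta$. Uniform ellipticity together with concavity then yields a uniform $C^{\alpha_0}$ bound on $\ddbar u$, with no real $C^2$ bound needed. Hence $\Delta_\omega u$ is bounded in $C^{\alpha_0}$. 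The Schauder estimates for $\Delta_\omega$ \cite{GT}, combined with the $C^0$ bound of Lemma~\ref{lem:zeroth}, then give $\|u\|_{C^{2,\alpha_0}(X)}\le C'$. With this replacement your proof matches the paper's.
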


Corollary~\ref{cor:hermitian} is a consequence of Theorem~\ref{thm:gradient} and the second order estimate of Sz\'ekelyhidi \cite[Proposition 13]{Szekelyhidi}; the subsolution hypothesis and the dependence of the constants are verified in Section~\ref{sec:hermitian}. A different argument, based on Dirichlet Green's functions, gives the following estimate independently of Theorem~\ref{thm:gradient}, at the cost of additional assumptions on the operator and of a K\"ahler background.

\begin{theorem}[Complex Hessian estimate in the K\"ahler case]\label{thm:kahler}
Assume the structural and common subsolution hypotheses stated above, and assume in addition that $\omega$ is K\"ahler, $\chi$ is closed, and $\Gamma\subset\Gamma_2$. Fix $h_->0$ and $K_2>0$, and suppose that
\[
h_-\le h\le H_0,\qquad \|\log h\|_{C^2(X)}\le K_2.
\]
Assume also that, on the set of levels $\{h_-\le f\le H_0\}$, the operator satisfies the growth condition \eqref{eq:growth} for large trace, the normalized ellipticity condition \eqref{eq:badcone}, and the refined concavity condition \eqref{eq:refined} formulated below. Then
\begin{equation}\label{eq:kahler}
\sup_X|\ddbar u|_\omega\le C.
\end{equation}
The constant depends only on the preceding data and on the constants and moduli in these additional conditions. In particular, no prior gradient estimate is needed. Furthermore, for normalized solutions, there exist $\alpha_0\in(0,1)$ and $C'$ depending only on the data such that $\|u\|_{C^{2,\alpha_0}(X)}\le C'$.
\end{theorem}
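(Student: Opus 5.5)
The plan is a direct maximum principle argument for the second derivatives, $C^2$ bounds without a prior gradient bound, in the spirit of Hou-Ma-Wu but with the gradient terms absorbed using Dirichlet Green's function estimates. This is the approach indicated in the introduction. The closedness of $\chi$ and the Kähler condition on $\omega$ remove torsion terms from the commutation formulas. The assumption $\Gamma\subset\Gamma_2$ gives the usual control $|\lambda_i|\le C\sum_j\lambda_j$ for admissible $\lambda$, so it suffices to bound $\tr_\omega\chi_u$ from above.

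\textbf{Step 1: zeroth order and subsolution input.} First, normalize $\inf_X u=0$ and use the Guo-Phong estimate (recalled in Section~\ref{sec:continuity}) to get $\|u\|_{L^\infty}\le C$. Next, convert the strict gap \eqref{eq:strict} into the quantitative $\mathcal C$-subsolution inequality of Sz\'ekelyhidi type. At any point where $|\lambda(A_u)|$ is large, this gives either
\[
\sum_i F^{i\bar i}\big((\chi_{\ub})_{i\bar i}-(\chi_u)_{i\bar i}\big)\ge \kappa\sum_iF^{i\bar i}
\qquad\text{or}\qquad F^{i\bar i}\ge\kappa\sum_jF^{j\bar j}\ \text{for all } i.
\]

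\textbf{Step 2: local test function with a Green's function weight.} Instead of the global quantity $\log\lambda_1+\varphi(|\nabla u|^2)+\psi(u-\ub)$, I would work on a small coordinate ball $B$ around the point where $\tr_\omega\chi_u$ attains its maximum. On $B$, let $v$ solve a linear Dirichlet problem $\mathcal L v=-\tr_\omega\chi_u$-type (or $=-1$), with $v=0$ on $\partial B$, where $\mathcal L=F^{i\bar j}\partial_i\partial_{\bar j}$ is the linearized operator. Represent $v$ via the Dirichlet Green's function of $\mathcal L$. The normalized ellipticity condition \eqref{eq:badcone} makes $\mathcal L$ uniformly elliptic after dividing by $\sum F^{i\bar i}$, up to a controlled bad set. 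Together with the growth condition \eqref{eq:growth}, this should give integral bounds on the Green's function, of Aleksandrov/Krylov type, uniform in the data.

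Then apply the maximum principle to
\[
Q=\log\tr_\omega\chi_u - A(u-\ub)+Bv .
\]
Differentiating the equation twice gives the standard inequality. The third order terms $F^{i\bar i}|\nabla (\chi_u)_{1\bar1}|^2/\lambda_1^2$ are handled by the refined concavity condition \eqref{eq:refined}, which replaces the need for a gradient bound when controlling $-F^{i\bar j,k\bar l}$ terms. The right-hand side terms $\partial\bar\partial\log h$ contribute only $K_2\sum F^{i\bar i}$ after dividing by $h$, which is why the hypothesis is on $\|\log h\|_{C^2}$.

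\textbf{Step 3: the main obstacle.} The main difficulty is the term involving $|\nabla u|$: the first order terms $\Rea(F^{i\bar i}u_i\,\partial_{\bar i}\tr\chi_u)/\tr\chi_u$ and the cutoff terms on $B$. Without a gradient estimate, these cannot be absorbed pointwise. This is exactly where the Green's function enters. I would bound $\int_B|\nabla u|^2$ and $\int_B|\nabla u|\,G$ using $\|u\|_{L^\infty}$ and integration by parts against the closed form $\chi_u\wedge\omega^{n-1}$, which uses Kählerness. I would then feed this into the representation of $v$, so that $Bv$ dominates these terms in an averaged sense. If this pointwise-versus-integral mismatch cannot be resolved directly, I would fall back on a blow-up free dichotomy: either $\tr\chi_u$ at the maximum is bounded by $C(1+\sup_B|\nabla u|^{2})$ with a small constant, or the subsolution alternative gives a contradiction. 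I would then combine this with the interpolation $\sup|\nabla u|^2\le C\sup|u|\,\sup|\ddbar u|$ on a Kähler manifold to close the estimate.

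\textbf{Step 4: higher regularity.} Once $\ddbar u$ is bounded, $\lambda(A_u)$ lies in a compact subset of $\Gamma$ because $h\ge h_->0$, so the equation is uniformly elliptic and concave. Evans-Krylov, in the complex version used by Sz\'ekelyhidi, then gives the $C^{2,\alpha_0}$ bound.
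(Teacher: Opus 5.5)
The framework of Steps 1, 2 and 4 is sound, but the proof has to happen in Step 3, and neither resolution you offer there works. First, the gradient term does not come from where you say. Your $Q=\log\tr_\omega\chi_u-A(u-\ub)+Bv$ contains no $|\nabla u|^2$, so the first order term you single out never appears. The gradient enters through the third order term $-|\partial\log\lambda_1|^2_{\Ln}$, which at a critical point equals $-|A\,\partial(u-\ub)-B\,\partial v|^2_{\Ln}$. Against it you only have the barrier term $A\kappa$, which is linear in $A$, so no choice of $A$ helps: you need $|\partial(u-\ub)|^2_{\Ln}$ to be smaller than a fixed multiple of $\kappa^2/C$. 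This must hold pointwise or, after subtracting a Green potential, in the sense of $\sup_x\int G_D(x,y)|\partial w_u|^2_{\Ln}(y)\,dV_\omega(y)$.

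An integration by parts bound $\int_X|\nabla u|^2\,dV_\omega\le C$ does not control this potential. The kernel $G_D(x,\cdot)$ is singular at the pole and depends on the solution, and Alexandrov-type bounds need $L^{2n}$ control on a set where the operator is elliptic. Even the correct averaged bound, from $\Ln(w_u^2)\ge2|\partial w_u|^2_{\Ln}$ on $\{\tr_\omega\chi_u>R_0\}$, only gives $\sup_x\int_DG_D|\partial w_u|^2_{\Ln}\le C_{\mathrm{shift}}^2/2$. That is bounded, not small.

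Your fallback is circular. A bound $\sup\tr_\omega\chi_u\le C(1+\sup|\nabla u|^2)$ combined with $\sup|\nabla u|^2\le C'\|u\|_{L^\infty}\sup|\ddbar u|$ closes only if $CC'\|u\|_{L^\infty}<1$. The quadratic dependence is scaling critical, and the subsolution alternative gives no small constant. Producing one is exactly what the blow-up argument and Liouville theorem of Dinew--Ko\l odziej accomplish, and the theorem is designed to avoid them.

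Three ideas are missing.

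(i) Work with $\lambda_1$, not the trace. Condition \eqref{eq:refined} is tailored to the eigenvalue-variation terms $\sum_{i>1}f_i|\eta_i|^2/(\lambda_1-\lambda_i)$, which $\log\tr_\omega\chi_u$ does not produce. Take $t=(\lambda_1/M)^\alpha$ and $\beta=\alpha$ in Lemma~\ref{lem:eigenvalue}. Then the gradient terms cancel exactly away from $\{\lambda_n<-\delta_\alpha\lambda_1\}$, so for $\Phi=t-a_0w_u$ the term $|\partial w_u|^2_{\Ln}$ survives only where $\lambda_n<-\frac{\delta_\alpha}{2}\lambda_1$.

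(ii) That set is precisely where \eqref{eq:badcone} gives uniform ellipticity. You have its role reversed: it gives ellipticity only on this set and none elsewhere, which is why the Alexandrov estimate must be localized as in Lemma~\ref{lem:greenlocalized}. Combine this with the torsion bound from the barrier \eqref{eq:highbarrier}, which is where \eqref{eq:growth} enters, and with $\Vol_\omega\{\tr_\omega\chi_u>R\}\le C/R$. Green potentials of bounded functions over the bad set are then $O(R^{-1/(2n)})$.

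(iii) The smallness of the gradient potential, Lemma~\ref{lem:greenenergy}, comes from an ingredient your plan never uses. This is the uniform $C^0$ compactness of solutions given by the stability estimate (Proposition~\ref{prop:stability} and Lemma~\ref{lem:C0compact}). Take a smooth admissible $v$ with $\|u-v\|_{L^\infty}\le\eta$. Concavity gives $\Ln(u-v)\le H_0\varepsilon(R)$ on $D$, so the function $(u-v-\eta)^2$ yields $\int_DG_D|\partial(u-v)|^2_{\Ln}\le2\eta^2+O(\eta\,\varepsilon(R))$. Meanwhile $|\partial(v-\ub)|^2_{\Ln}\le\|\nabla(v-\ub)\|^2_{L^\infty}$ is a fixed finite number multiplying the $O(R^{-1/(2n)})$ potential. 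Choosing $v=u_N$ along a uniformly Cauchy sequence then gives the required smallness by contradiction.

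The maximum principle is then applied to $e^{8\Phi/\alpha}$ minus its Green potential on $D=\{t>\frac14\}$. This domain needs no cutoff, whereas the cutoff on your ball $B$ would reintroduce gradient terms you cannot control.
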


We note that both second order results require a positive lower bound for $h$, whereas the stability estimate and the gradient estimate remain uniform as $h$ tends to zero. The additional assumptions needed for the Green's function argument are formulated in Section~\ref{sec:directhessian}; they play no role in Corollary~\ref{cor:hermitian}.

We briefly describe the idea of the comparison argument. As usual, the subsolution $\ub$ provides an alternative for the linearized operator. The new ingredient is a nearby admissible function $v$, which allows us to add a large multiple of $(u-v-\eta)^2$ to the weight in the test function for the gradient. The second derivatives of this term produce a favorable quadratic term, while its first derivatives remain under control, since $u-v$ is small. After differentiating the equation and commuting Chern covariant derivatives, the favorable term absorbs the remaining mixed terms. The resulting estimate depends on the gradient of $v$, but the required distance between $u$ and $v$ does not. Consequently, in a uniformly Cauchy sequence of solutions, a single fixed member of the sequence can serve as $v$ for all the subsequent ones.

The paper is organized as follows. In Section~\ref{sec:continuity}, we establish the uniform continuity and compactness of the solutions, following the stability argument of Cheng-Xu and making use of an auxiliary Monge-Amp\`ere equation on Hermitian manifolds. The analytic inputs for this equation are the existence theorem of Tosatti-Weinkove \cite{TW} and the estimates of Guo-Phong \cite{GP}. Section~\ref{sec:gradient} is devoted to the gradient estimate and the proof of Theorem~\ref{thm:gradient}. The normalization by the sup-slope is discussed in Section~\ref{sec:normalization}, and Corollary~\ref{cor:hermitian} is proved in Section~\ref{sec:hermitian}. Finally, the direct proof of Theorem~\ref{thm:kahler} by Green's functions is given in Section~\ref{sec:directhessian}.

\subsection*{Acknowledgements} Part of this work was carried out with the assistance of ChatGPT.  Proposition~\ref{prop:comparison}  and its proof were first suggested by ChatGPT in the setting of complex Hessian equations; we find it elegant and, to our knowledge, completely new. AI assistance was also used in developing the alternative proof of the complex Hessian estimates in Section~\ref{sec:directhessian}.

\section{Uniform continuity of the solutions}\label{sec:continuity}
We begin by establishing the compactness needed for the gradient estimate. The argument follows Cheng-Xu \cite{CX}: admissibility gives compactness in $L^1$, and the stability estimate upgrades it to uniform convergence.

\subsection{Consequences of the strict sup-slope condition}
Let $\mu(x)=\lambda(A_{\ub}(x))$, with the eigenvalues arranged in increasing order. A first consequence of the strict gap is that positive perturbations of the subsolution are bounded, as long as the value of the operator stays below a fixed level.

\begin{lemma}\label{lem:pinching}
Let $H$ be a fixed positive continuous function, and suppose that
\[
f_\infty(\mu(x))\ge H(x)+4\delta\qquad(x\in X)
\]
for some $\delta>0$. Then there exist $r>0$ and $R<\infty$ such that
\begin{equation}\label{eq:margin}
\mu(x)-2r\one\in\Gamma,\qquad
f_\infty(\mu(x)-2r\one)\ge H(x)+3\delta
\end{equation}
everywhere. Moreover, for any positive semi-definite Hermitian endomorphism $P$ at $x$,
\begin{equation}\label{eq:pinching}
F(A_{\ub}(x)-r\Id+P)\le H(x)+2\delta
\quad\Longrightarrow\quad 0\le P\le R\Id.
\end{equation}
\end{lemma}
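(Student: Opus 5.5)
The plan is to get both assertions from monotonicity of $f$, lower semicontinuity of $f_\infty$, and compactness of $X$. The uniform upper bound on $P$ comes from letting one eigenvalue of the matrix tend to infinity, with the extra margin $r$ absorbing the error in the remaining eigenvalues.

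\emph{Step 1: the margin \eqref{eq:margin}.}
Since $f_i>0$, the map $t\mapsto f(\mu+te_i)$ is increasing, so $f_{\infty,i}(\mu)=\sup_{t}f(\mu+te_i)$. The same holds for $\mu$ ranging over a neighbourhood in $\Gamma$.
\begin{itemize}
\item As a supremum of continuous functions, each $f_{\infty,i}$ is lower semicontinuous with values in $(0,\infty]$. Hence so is the minimum $f_\infty$.
\item The map $x\mapsto\mu(x)$ is continuous and $\Gamma$ is open. By compactness of $X$, $\mu(x)-2r\one\in\Gamma$ for all $x$ once $r\le r_0$.
\item The function $(x,s)\mapsto f_\infty(\mu(x)-2s\one)-H(x)$ on $X\times[0,r_0]$ is lower semicontinuous, because $H$ is continuous.
\item At $s=0$ this function is $\ge 4\delta>3\delta$. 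So each point $(x,0)$ has an open neighbourhood on which it exceeds $3\delta$.
\item Compactness of $X\times\{0\}$ yields a uniform $r>0$. If infinite values occur they cause no harm.
\end{itemize}

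\emph{Step 2: a uniform large-$T$ bound.}
Next I claim that there is $T_0$ such that
\[
f\bigl(\mu(x)-2r\one+Te_i\bigr)>H(x)+2\delta\quad\text{for all } x,\ i,\ T\ge T_0 .
\]
For fixed $x$ and $i$, the limit as $T\to\infty$ is $\ge H(x)+3\delta$ by Step 1, so some $T_x$ works at $x$. By continuity of $f$ and $H$, the same $T_x$ works on a neighbourhood of $x$. Monotonicity in $T$ then lets us take $T_0$ to be the maximum over a finite subcover and over $i$.

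\emph{Step 3: the pinching \eqref{eq:pinching}.}
The inequality $P\ge0$ is given. For the upper bound, set $B=A_{\ub}(x)-r\Id$, which is uniformly bounded on $X$. Suppose $\lambda_{\max}(P)=R'$, and let $v$ be a unit eigenvector for $R'$ in an $\omega$-orthonormal frame. Then $P\ge R'vv^*$, and since $F$ is monotone,
\[
F(B+P)\ge F(B+R'vv^*).
\]
Rotate so that $v=e_1$, and let $B''$ be the lower $(n-1)\times(n-1)$ block of $B$. Cauchy interlacing gives $\lambda_j(B'')\ge\lambda_j(B)=\mu_j-r$ for $j\le n-1$, in increasing order. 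The standard perturbation (Schur complement) estimate gives, for $R'\ge C_0$, the following:
\begin{itemize}
\item the $n-1$ smallest eigenvalues of $B+R'e_1e_1^*$ lie within $C|B|^2/R'$ of $\lambda(B'')$;
\item the largest eigenvalue is $\ge R'-C$.
\end{itemize}
The constants here are uniform in $x$ and $v$. Choose $R'$ large enough that $C|B|^2/R'\le r$. Then, componentwise in increasing order,
\[
\lambda(B+R'vv^*)\ge(\mu_1-2r,\dots,\mu_{n-1}-2r,\,R'-C)\ge\mu-2r\one+(R'-C-\mu_n)e_n .
\]
By symmetry and monotonicity of $f$, we get $F(B+P)\ge f(\mu-2r\one+Te_n)$ with $T=R'-C-\sup\mu_n$. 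Once $R'\ge R:=T_0+C+\sup_X|\mu|+C_0$, Step 2 gives $F(B+P)>H+2\delta$. This contradicts the hypothesis, so $P\le R\Id$.

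\emph{Main obstacle.}
The delicate point is the uniformity in Step 3. The eigenvalue perturbation bound must hold uniformly in the direction $v$ and the point $x$, and the interlacing must cost at most $r$ rather than $2r$. That is exactly why the margin in \eqref{eq:margin} is taken at $2r$ while the matrix is only shifted by $r$.
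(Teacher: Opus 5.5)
Your proof is correct, but it is organized differently from the paper's.

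\textbf{The margin \eqref{eq:margin}.} The paper first uses concavity and positivity of $f$ to show that each $f_{\infty,i}$ is either finite everywhere (hence concave and continuous) or identically $+\infty$. It then invokes compactness of $\mu(X)$. You only use that $f_\infty$ is lower semicontinuous, being a supremum of continuous functions by monotonicity, and then apply a tube-lemma argument on $X\times[0,r_0]$. This is shorter, and this step does not use concavity at all.

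\textbf{The pinching \eqref{eq:pinching}.} The paper argues by contradiction, taking $x_j\to x_*$ and $\|P_j\|\to\infty$. Weyl's monotonicity gives $\lambda_i^{(j)}\ge\mu_i(x_j)-r$, and the top eigenvalue tends to infinity. The extra $r$ in the margin is spent absorbing the gap between $\mu(x_j)$ and $\mu(x_*)$.

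You instead secure the uniformity in $x$ in advance (Step 2, via a finite subcover and monotonicity in $T$) and then argue directly. In your route the extra $r$ pays for an eigenvalue perturbation error instead. This gives a direct proof, with $R$ expressed in terms of $T_0$.

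\textbf{Simplification.} The rank-one reduction, Cauchy interlacing and Schur-complement estimate are more machinery than needed. Weyl's inequalities applied directly to $A_{\ub}(x)-r\Id+P$ give $\lambda_i\ge\mu_i-r$ for every $i$ and $\lambda_{\max}\ge\mu_1-r+\lambda_{\max}(P)$. There is then no perturbation loss at all, so the ``main obstacle'' you identify is an artifact of your reduction rather than of the problem.

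\textbf{Minor fix.} Your displayed choice of $R$ should also include the threshold $R'\ge C\sup_X|A_{\ub}-r\Id|^2/r$, unless you absorb it into $C_0$.
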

\begin{proof}
Each $f_{\infty,i}$ is an increasing limit of positive concave functions on $\Gamma$. We claim that it is either finite everywhere, in which case it is concave and continuous, or infinite everywhere. Indeed, if it were infinite at one point and finite at an interior point, then extending the segment joining them slightly beyond the latter point and applying concavity would lead to a contradiction. By symmetry, the same alternative holds for every $i$. Thus $f_\infty$ is either finite and continuous, or identically infinite. The compactness of $\mu(X)\Subset\Gamma$ now implies \eqref{eq:margin}, after decreasing $r$ if necessary.

Suppose now that \eqref{eq:pinching} fails. Then there exist points $x_j\to x_*$ and positive semi-definite endomorphisms $P_j$ with $\|P_j\|\to\infty$ such that
\[
F(A_{\ub}(x_j)-r\Id+P_j)\le H(x_j)+2\delta.
\]
Let $\lambda^{(j)}$ denote the ordered eigenvalues of $A_{\ub}(x_j)-r\Id+P_j$. By Weyl's monotonicity principle,
\[
\lambda^{(j)}_i\ge\mu_i(x_j)-r\qquad(1\le i\le n).
\]
Since $\sum_i\lambda^{(j)}_i=\sum_i\mu_i(x_j)-nr+\tr P_j\to\infty$, some component, say $\lambda^{(j)}_k$, tends to infinity along a subsequence. For each fixed $t>0$, the continuity of $\mu$ then implies that, for all $j$ sufficiently large,
\[
\lambda^{(j)}\ge\mu(x_*)-2r\one+te_k
\]
componentwise. Using the ellipticity of the operator and letting first $j\to\infty$ and then $t\to\infty$, we obtain
\[
\liminf_jF(A_{\ub}(x_j)-r\Id+P_j)
\ge f_{\infty,k}(\mu(x_*)-2r\one)
\ge H(x_*)+3\delta.
\]
In view of the continuity of $H$, this contradicts the upper bound above. Note that all the matrices used in this comparison are admissible, since $\Gamma+\Gamma_n\subset\Gamma$ by the convexity of $\Gamma$.
\end{proof}

\begin{remark}\label{rem:envelope}
Lemma~\ref{lem:pinching} will be applied with a single upper bound for all the right-hand sides. If $f_\infty$ is finite, we set
\[
H(x)=\min\{H_0,f_\infty(\mu(x))-\delta_0/2\};
\]
if $f_\infty\equiv\infty$, we set $H=H_0$ instead. For any nonempty family satisfying \eqref{eq:strict}, this function is positive, and we have $h\le H$ and $f_\infty(\mu)\ge H+\delta_0/2$. We then take $\delta=\delta_0/8$. The constants $r,R,\delta$ depend only on the prescribed data, including the common gap $\delta_0$; in particular, they do not depend on the modulus of continuity of any individual $h$.
\end{remark}

\begin{lemma}\label{lem:quantC}
Assume the structural and common strict sup-slope hypotheses of the introduction, and let $\mu(x)=\lambda(A_{\ub}(x))$. Then there exist $d>0$ and $R_{\mathrm{sub}}<\infty$, depending only on $f,\Gamma,A_{\ub},H_0,\delta_0$, such that $\mu(x)-2d\one\in\Gamma$ and
\begin{equation}\label{eq:quantC}
\bigl(\mu(x)-2d\one+\Gamma_n\bigr)
\cap\{\lambda\in\Gamma:f(\lambda)=h(x)\}
\subset B_{R_{\mathrm{sub}}}(0)
\end{equation}
for every $x\in X$ and every right-hand side in the family. In particular, these constants do not depend on any bound for the derivatives of $h$ or $u$.
\end{lemma}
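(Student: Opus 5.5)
The plan is to derive Lemma~\ref{lem:quantC} from Lemma~\ref{lem:pinching}, applied with the envelope $H$ of Remark~\ref{rem:envelope} and $\delta=\delta_0/8$. Every $h$ in the family satisfies $h\le H$, and $H$ depends only on $H_0,\delta_0,f,\Gamma,A_{\ub}$. The lemma then supplies $r>0$ and $R<\infty$ with $\mu(x)-2r\one\in\Gamma$ and the pinching property \eqref{eq:pinching}. We set $d=r/2$, so that $\mu-2d\one=\mu-r\one\in\Gamma$. This holds because $\mu-2r\one\in\Gamma$ and $\Gamma+\Gamma_n\subset\Gamma$.

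Now take $\lambda\in\mu(x)-2d\one+\Gamma_n$ with $f(\lambda)=h(x)$, and write $\lambda=\mu(x)-r\one+p$ with $p\in\Gamma_n$, allowing the closure $\overline{\Gamma}_n$ if needed. Diagonalize $A_{\ub}(x)$ in an $\omega$-unitary frame and let $P=\mathrm{diag}(p)$ in that frame. Then $A_{\ub}(x)-r\Id+P$ is diagonal with eigenvalues $\lambda$, so
\[
F(A_{\ub}(x)-r\Id+P)=f(\lambda)=h(x)\le H(x)\le H(x)+2\delta .
\]
By \eqref{eq:pinching} we get $0\le P\le R\Id$, that is, $0\le p_i\le R$. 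Hence $|\lambda_i|\le \max_X|\mu|+r+R$, and we can take $R_{\mathrm{sub}}=\sqrt n\,(\max_X|\mu|+r+R)+1$. This bound is uniform in $x$ and in $h$, and it uses no derivatives of $h$ or $u$.

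The only delicate point is the uniformity. The constants $r,R$ of Lemma~\ref{lem:pinching} must depend on $H$ and not on $h$. Remark~\ref{rem:envelope} ensures this, since $H$ is built from $H_0$, $\delta_0$ and $f_\infty(\mu)$ alone, and it is continuous and positive in both cases ($f_\infty$ finite and continuous, or $f_\infty\equiv\infty$). We also need $f_\infty(\mu)\ge H+4\delta$ with $\delta=\delta_0/8$, which holds because $f_\infty(\mu)\ge H+\delta_0/2$.

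One further check is that the set of $h$ in the family is nonempty. If it is empty, \eqref{eq:quantC} is vacuous, so nothing is lost by assuming nonemptiness when we invoke the remark.
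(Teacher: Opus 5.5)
Your proposal is correct and follows the paper's proof essentially step for step. You apply Lemma~\ref{lem:pinching} with the envelope $H$ of Remark~\ref{rem:envelope} and $\delta=\delta_0/8$, set $2d=r$, write $\lambda=\mu(x)-r\one+p$ with a diagonal $P$ in an eigenbasis of $A_{\ub}(x)$, and read off $0\le p_i\le R$ from \eqref{eq:pinching}. The resulting radius $R_{\mathrm{sub}}$ differs from the paper's only in the inessential bookkeeping of the $\sqrt n$ factors.
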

\begin{proof}
Let $H$ be the function defined in Remark~\ref{rem:envelope}. It is positive and continuous, and satisfies
\[
h\le H,\qquad f_\infty(\mu)\ge H+\delta_0/2.
\]
Applying Lemma~\ref{lem:pinching} with $\delta=\delta_0/8$, we obtain constants $r,R>0$ which do not depend on the individual right-hand side. We set $2d=r$. If $\lambda$ belongs to the intersection in \eqref{eq:quantC}, we can write
\[
\lambda=\mu(x)-r\one+v,\qquad v\in\Gamma_n.
\]
In an eigenbasis for $A_{\ub}(x)$, the positive diagonal endomorphism $P=\operatorname{diag}(v_1,\ldots,v_n)$ satisfies
\[
F(A_{\ub}(x)-r\Id+P)=f(\lambda)=h(x)\le H(x).
\]
It follows from \eqref{eq:pinching} that $P\le R\Id$. Hence $0<v_i\le R$, and
\[
|\lambda|\le\sup_X|\mu|+\sqrt n\,(r+R).
\]
We may therefore take
\begin{equation}\label{eq:radius}
R_{\mathrm{sub}}=1+\sup_X|\mu|+\sqrt n\,(r+R).
\end{equation}
Finally, the inclusion $\mu-r\one\in\Gamma$ is a consequence of \eqref{eq:margin}.
\end{proof}

Since the lemma also bounds the intersections $(\mu(x)+\Gamma_n)\cap\{f=h(x)\}$, the subsolution is in particular a $\mathcal C$-subsolution. The shift by $2d\one$ provides the uniform margin required in the algebraic alternative below. Its existence relies on the admissibility of the fixed subsolution, which is a stronger condition than that of being a $\mathcal C$-subsolution.

\subsection{The $L^\infty$ estimate}
We recall next the consequence of \cite[Theorem 3.1]{GPsub} which will be used throughout the paper.

\begin{lemma}[Guo-Phong]\label{lem:zeroth}
Assume the structural and common strict sup-slope hypotheses of the introduction. Then every smooth admissible solution of $F(A_u)=h$, with $0<h\le H_0$ and $\inf_Xu=0$, satisfies
\begin{equation}\label{eq:zeroth}
0\le u\le C_{\mathrm{osc}},\qquad
C_{\mathrm{osc}}=C(X,\omega,\chi,f,\Gamma,\ub,H_0,\delta_0).
\end{equation}
The constant does not depend on $\inf_Xh$, nor on any norm of the derivatives of $h$ and $u$.
\end{lemma}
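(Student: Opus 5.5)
The plan is to reduce Lemma~\ref{lem:zeroth} to \cite[Theorem 3.1]{GPsub}, verifying that its hypotheses hold with constants depending only on the fixed data. The Guo-Phong theorem asserts the following. Suppose $F(A_u)=h$ on a compact Hermitian manifold and $\ub$ is a $\mathcal C$-subsolution in a quantitative sense: there are $d>0$ and $R_{\mathrm{sub}}$ such that $(\lambda(A_{\ub})-d\one+\Gamma_n)\cap\{f=h\}\subset B_{R_{\mathrm{sub}}}$. Then $\sup_X(u-\ub)-\inf_X(u-\ub)$ is bounded by a constant depending on $(X,\omega,\chi,f,\Gamma)$, $\|\ub\|_{C^2}$, $d$, $R_{\mathrm{sub}}$ and an upper bound for $h$ (or for a suitable entropy of $h$). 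So the first step is to invoke Lemma~\ref{lem:quantC}. It produces exactly such $d$ and $R_{\mathrm{sub}}$, uniformly over the family. These constants depend only on $f,\Gamma,A_{\ub},H_0,\delta_0$, and not on $\inf h$ or any derivative of $h$. Since $\mu-2d\one+\Gamma_n\supset\mu-d\one+\Gamma_n$, any shift convention used in \cite{GPsub} is covered.

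Next I will check the right-hand side hypothesis. The Guo-Phong method compares $u$ with solutions of auxiliary complex Monge-Amp\`ere equations whose right-hand side is built from $F(A_u)$. It needs only an upper bound on an $L^1(\log L)^p$-type entropy of the normalized density. Since $h\le H_0$, this entropy is bounded by a constant depending on $H_0$ and $\Vol(X)$. No positive lower bound on $h$ enters, because in their argument $h$ appears only through $h^n$-type powers integrated against $\omega^n$. The lower bound side of their proof uses only admissibility, $\lambda(A_u)\in\Gamma\subset\Gamma_1$, which gives $\tr_\omega\chi_u>0$ and hence controls $u$ from the side opposite to the subsolution in the standard way.

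Combining these gives $\osc_X(u-\ub)\le C$. Together with $\osc_X\ub\le C(\ub)$ and the normalization $\inf_Xu=0$, this yields $0\le u\le C_{\mathrm{osc}}$.

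The main obstacle is the bookkeeping in matching hypotheses. \cite{GPsub} may phrase the $\mathcal C$-subsolution condition with a specific cone shift or a pointwise-in-$x$ compactness. I must confirm that their constant depends only on $(d,R_{\mathrm{sub}})$ and $H_0$, and not on a modulus of continuity of $h$ or on $\inf h$. If their statement assumes $h$ bounded below, I would re-run their alternative (the cone condition forcing either $\mathcal L(\ub-u)\ge\theta\sum F^{ii}$ or $F^{ii}\ge\theta\sum F^{jj}$) at the level of the auxiliary ABP/Monge-Amp\`ere comparison. That alternative uses only \eqref{eq:quantC}, so the dependence stays as claimed.
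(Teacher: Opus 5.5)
Your proposal is correct and takes essentially the same route as the paper: Lemma~\ref{lem:quantC} supplies a uniform $(2d,R_{\mathrm{sub}})$-subsolution, and \cite[Theorem 3.1]{GPsub} is then applied to $w=u-\ub$ with the shifted background $\chi+\ddbar\ub$, which needs only pointwise bounds of this form against $\omega$ and no closedness; one finishes by adding $\osc_X\ub$ and using $\inf_Xu=0$. A small correction to your description of the cited proof: in the $\mathcal C$-subsolution setting $h$ enters only through the level sets $\{f=h(x)\}$, hence only through $d$ and $R_{\mathrm{sub}}$, and not through an entropy of $h$, while admissibility ($\Gamma\subset\Gamma_1$) supplies the $L^1$ bound. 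This does not affect your reduction.
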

\begin{proof}
Set $\widehat\chi=\chi+\ddbar\ub$ and $w=u-\ub$. By Lemma~\ref{lem:quantC}, $\widehat\chi$ is a $(2d,R_{\mathrm{sub}})$-subsolution in the terminology of \cite{GPsub}. Since this form is fixed and smooth, we can choose $\kappa_1,\kappa_2>0$ such that
\[
-n\kappa_1\omega\le\widehat\chi\le\kappa_2\omega.
\]
Theorem 3.1 of \cite{GPsub}, applied with $\theta=-\ub$ in the notation there, gives
\[
\osc_Xw\le C(n,\omega,2d,R_{\mathrm{sub}},\kappa_1,\kappa_2).
\]
Adding $\osc_X\ub$ and using the normalization $\inf_Xu=0$, we obtain \eqref{eq:zeroth}.

We note that the proof in the Hermitian case in \cite[Section 3]{GPsub} uses only these pointwise bounds on the form and the quantitative subsolution condition. It uses neither the closedness of the background form nor any normalization of $h$, and it therefore applies to the smooth real $(1,1)$-form $\chi$ considered here. Moreover, neither the auxiliary local Monge-Amp\`ere equation used there nor the $L^1$ bound resulting from $\Gamma\subset\Gamma_1$ requires any gradient or second order estimate for $u$.
\end{proof}

\subsection{The auxiliary Monge-Amp\`ere equation}

We shall compare the solutions with the solution of a Monge-Amp\`ere equation on the Hermitian manifold, whose density is concentrated on a small set. Since the total volume is not a cohomological invariant in this setting, we need bounds for the normalizing constant as well as for the potential.

Let $\alpha$ be a fixed Hermitian metric, and let $\alpha_\psi=\alpha+\ddbar\psi>0$. It follows from the estimates of Guo-Phong \cite{GP} that, for $p>n$, an upper bound for
\begin{equation}\label{eq:entropy}
\int_X\frac{\alpha_\psi^n}{\alpha^n}
\left|\log\frac{\alpha_\psi^n}{\alpha^n}\right|^p\alpha^n
\end{equation}
implies a positive lower bound for $\int_X\alpha_\psi^n$ and, with the normalization $\sup_X\psi=0$, a bound for $\|\psi\|_{L^\infty}$. Since an $L^q$ bound on the density, for any $q>1$, suffices to control the integral \eqref{eq:entropy}, we obtain the following lemma.

\begin{lemma}\label{lem:auxiliary}
Fix a Hermitian metric $\alpha$, a number $q>1$, and constants $a_0,L>0$. If $\rho$ is a smooth positive function satisfying
\[
\rho\ge a_0,\qquad \fint_X\rho\alpha^n=1,\qquad \fint_X\rho^q\alpha^n\le L,
\]
then the smooth solution of the Monge-Amp\`ere equation
\begin{equation}\label{eq:auxiliary}
(\alpha+\ddbar\psi)^n=c\rho\alpha^n,\qquad
\alpha+\ddbar\psi>0,\qquad \sup_X\psi=0
\end{equation}
satisfies
\begin{equation}\label{eq:auxbounds}
c_0\le c\le a_0^{-1},\qquad -C_\psi\le\psi\le0,
\end{equation}
where $c_0>0$ and $C_\psi$ depend only on $\alpha,q,a_0,L$. Here $\fint$ denotes the average with respect to the indicated volume form.
\end{lemma}
\begin{proof}
The existence of a smooth solution $(\psi,c)$ for a smooth positive $\rho$ is the theorem of Tosatti-Weinkove \cite{TW}. At a maximum point of $\psi$, we have $\alpha+\ddbar\psi\le\alpha$, hence $c\rho\le1$ there, and $c\le a_0^{-1}$. Consequently, the density $\alpha_\psi^n/\alpha^n=c\rho$ satisfies a uniform $L^q$ bound, and the preceding observation, based on the estimates of \cite{GP}, gives $\|\psi\|_{L^\infty}\le C_\psi$ and $\int_X\alpha_\psi^n\ge v_0>0$. Integrating \eqref{eq:auxiliary}, we find $c\int_X\alpha^n=\int_X\alpha_\psi^n$, whence $c\ge v_0/\int_X\alpha^n$.
\end{proof}
\begin{remark}
In the application below, the auxiliary density $\rho$ will be bounded below by $1/2$, even when the right-hand side $h$ of the original equation has no uniform positive lower bound. On a Hermitian manifold, the constant $c$ need not be equal to one, which is why both bounds in \eqref{eq:auxbounds} are needed.
\end{remark}

\subsection{Compactness in \texorpdfstring{$L^1$}{L1}}
The $L^\infty$ estimate, together with admissibility, gives a first compactness result.

\begin{lemma}\label{lem:L1compact}
The family of normalized solutions in Lemma~\ref{lem:zeroth} is precompact in $L^1(X)$.
\end{lemma}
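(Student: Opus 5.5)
The plan is to deduce $L^1$ precompactness from a uniform $W^{1,1}$ bound and Rellich--Kondrachov. The key inputs are the uniform $L^\infty$ bound of Lemma~\ref{lem:zeroth} and the fact that admissible functions are quasi-subharmonic with respect to $\omega$, since $\Gamma\subset\Gamma_1$.

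First, we record the subharmonicity. Since $\lambda(A_u)\in\Gamma\subset\Gamma_1$, we have $\tr_\omega(\chi+\ddbar u)>0$, hence
\[
\Delta_\omega u:=\tr_\omega\ddbar u\ge-\tr_\omega\chi\ge-C_0 .
\]
Here $\Delta_\omega$ is the complex Laplacian $g^{i\bar j}\partial_i\partial_{\bar j}$. On a Hermitian manifold it differs from the Levi-Civita Laplacian by a first-order term, but it is still a second order elliptic operator with smooth coefficients.

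Next, we bound $\|\Delta_\omega u\|_{L^1}$. Write $\Delta_\omega u=(\Delta_\omega u+C_0)-C_0$, where the first term is nonnegative. Its integral is controlled by integrating against $\omega^n$. We have $\int_X\ddbar u\wedge\omega^{n-1}=\int_X u\,\ddbar\omega^{n-1}$ by Stokes, and this is bounded by $C\sup_X|u|\le C\,C_{\mathrm{osc}}$. It follows that $\int_X|\Delta_\omega u|\,\omega^n\le C$ uniformly over the family.

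Then we turn this into a uniform gradient bound in $L^1$ (indeed in $L^p$ for $p<2n/(2n-1)$). Test the inequality with the function $(u+1)^{-\epsilon}$, or alternatively integrate $|\nabla u|^2$ against the weight $e^{-u}$. The second option is simpler: compute
\[
\int_X e^{-u}\,\sqrt{-1}\partial u\wedge\bar\partial u\wedge\omega^{n-1}
= -\int_X\sqrt{-1}\partial(e^{-u})\wedge\bar\partial u\wedge\omega^{n-1}.
\]
Integrating by parts moves the derivative onto $\ddbar u\wedge\omega^{n-1}$, plus torsion terms involving $\partial\omega^{n-1}$. These are bounded by $C\int|\nabla u|\le \varepsilon\int|\nabla u|^2+C_\varepsilon$, using $0\le u\le C_{\mathrm{osc}}$ and the $L^1$ bound on $\ddbar u\wedge\omega^{n-1}$ from the previous step. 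Because $e^{-u}\ge e^{-C_{\mathrm{osc}}}$, the torsion term can be absorbed, and we obtain a uniform bound on $\int_X|\nabla u|^2_\omega\,\omega^n$. Thus the family is bounded in $W^{1,2}(X)$.

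Finally, Rellich--Kondrachov on the compact manifold $X$ gives precompactness in $L^2$, and hence in $L^1$. The $L^\infty$ bound also guarantees that any $L^1$ limit is bounded.

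The main obstacle is the integration by parts on a non-Kähler $\omega$, where torsion terms from $d\omega^{n-1}$ appear. The plan is to control them solely through $\sup|u|$, $\int|\nabla u|$, and absorption. This needs care with signs, in particular the sign of $\ddbar u\wedge\omega^{n-1}$, which only has the one-sided bound coming from $\Gamma_1$. One must use the decomposition $\ddbar u\wedge\omega^{n-1}=(\chi_u-\chi)\wedge\omega^{n-1}$, with $\chi_u\wedge\omega^{n-1}>0$, so that the positive part is integrated against the bounded weight $e^{-u}$.
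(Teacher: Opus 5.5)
Your argument is correct, but it reaches the $W^{1,2}$ bound by a different route from the paper.

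The paper uses the invariant measure $d\mu$ coming from the Gauduchon metric conformal to $\omega$. This measure satisfies $\int_X\Delta_\omega w\,d\mu=0$ for every smooth $w$, so $0=\int_X\Delta_\omega(u^2)\,d\mu$ gives $\int_X|\nabla u|_\omega^2\,d\mu=-\int_Xu\,\Delta_\omega u\,d\mu\le C\,C_{\mathrm{osc}}\,\mu(X)$ in one line, with no torsion terms and no absorption.

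You instead integrate by parts against $\omega^{n-1}$, which is not $\partial\bar\partial$-closed in general. You therefore need the preliminary $L^1$ bound on $\Delta_\omega u$, which comes from the Stokes identity $\int_X\ddbar u\wedge\omega^{n-1}=\int_Xu\,\ddbar\omega^{n-1}$ and the one-sided bound $\Delta_\omega u\ge-C_0$. You then absorb the torsion term $\int_Xe^{-u}\sqrt{-1}\,\bar\partial u\wedge\partial\omega^{n-1}=O\bigl(\int_X|\nabla u|_\omega\bigr)$ using $e^{-u}\ge e^{-C_{\mathrm{osc}}}$. The signs and types check out as you indicate.

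Your version is entirely elementary and avoids Gauduchon's theorem, at the cost of an extra step and an absorption argument; the paper's version is shorter.

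The weight $u$ would serve you better than $e^{-u}$. The identity $\int_X\sqrt{-1}\,\partial u\wedge\bar\partial u\wedge\omega^{n-1}=-\int_Xu\,\ddbar u\wedge\omega^{n-1}+\int_Xu\sqrt{-1}\,\bar\partial u\wedge\partial\omega^{n-1}$ has an unweighted left-hand side. Combined with the pointwise bound $-u\Delta_\omega u\le C_0u$, valid since $u\ge0$, it gives the same conclusion without the separate $L^1$ bound on $\Delta_\omega u$.
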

\begin{proof}
Since $\Gamma\subset\Gamma_1$, admissibility implies $\Delta_\omega u\ge-\tr_\omega\chi\ge-C$. Recall that there exists a smooth positive measure $d\mu$ which is invariant under the complex Laplacian of the background metric, in the sense that $\int_X\Delta_\omega w\,d\mu=0$ for every smooth $w$. It can be obtained from the Gauduchon metric in the conformal class of $\omega$, and its density is a fixed function, uniformly comparable to $\omega^n$. By the product rule,
\[
0=\int_X\Delta_\omega(u^2)\,d\mu
=2\int_Xu\Delta_\omega u\,d\mu+2\int_X|\nabla u|_\omega^2\,d\mu.
\]
By Lemma~\ref{lem:zeroth}, $0\le u\le C_{\mathrm{osc}}$, so the last integral is bounded by $C C_{\mathrm{osc}}\mu(X)$. Since the $L^2$ norm of $u$ is also bounded, the asserted $L^1$ precompactness follows from Rellich's theorem.
\end{proof}

\subsection{Stability}
We now establish the estimate which upgrades this compactness to uniform convergence. The proof makes use of the auxiliary Monge-Amp\`ere equation above, with a density concentrated near a maximum point of the difference of the two solutions.
\begin{proposition}\label{prop:stability}
Fix $(X,\omega),\chi,f,\Gamma,\ub$ and positive numbers $H_0,\delta_0$. Let $u_1,u_2\in\E$ be solutions of
\[
F(A_{u_i})=h_i,\qquad 0<h_i\le H_0,\qquad \inf_Xu_i=0.
\]
Assume that \eqref{eq:strict} holds for both $h_1$ and $h_2$, with the same $\ub$ and $\delta_0$. Then there exists a constant, depending only on these fixed data, such that
\begin{equation}\label{eq:stability}
\|u_1-u_2\|_{L^\infty}\le C\bigl(\|h_1-h_2\|_{L^\infty}+\|u_1-u_2\|_{L^1}\bigr).
\end{equation}
The constant $C$ depends neither on bounds for the derivatives of $h_i$, nor on a positive lower bound for $h_i$.
\end{proposition}
This type of stability estimates is first obtained for the complex Hessian equations by Dinew-Ko\l{}odziej \cite{DK0} using pluripotential theory. 
In \cite[Theorem 4.1]{CX}, Cheng-Xu obtained an estimate with right-hand side $\|h_1-h_2\|_{L^\infty}+\|u_1-u_2\|_{L^1}^{\alpha}$ for $0<\alpha<1/(n+1)$, with constants depending on $\|\log h_i\|_{L^\infty}$. In their setting, the common strict subsolution is only required to lie in $\Gamma_\infty$, whereas ours is $\Gamma$-admissible. Under this stronger hypothesis, the uniform zeroth order estimate is provided by Guo-Phong \cite{GPsub}, and with the stated normalization, the $L^1$ term in \eqref{eq:stability} becomes linear, with a constant controlled by $H_0,\delta_0$ and the fixed data of the geometry, the operator, and the subsolution, but independent of $\inf_X h_i$. Both estimates contain the same linear $L^\infty$ term in the difference of the densities, and neither requires bounds on the derivatives of $h_i$. Thus Proposition~\ref{prop:stability} may be viewed as a modest refinement of the stability result of Cheng-Xu.

\begin{proof}
Since the two solutions can be interchanged, it suffices to bound $\max_X(u_2-u_1)$. We first absorb the subsolution into the background form by setting
\[
\theta=\chi+\ddbar\ub,\qquad u=u_1-\ub,\qquad v=u_2-\ub.
\]
Thus $u-v=u_1-u_2$, while $\theta+\ddbar u=\chi_{u_1}$ and $\theta+\ddbar v=\chi_{u_2}$. By Lemma~\ref{lem:zeroth}, we can choose a fixed constant $C_0\ge1$ such that $|u|,|v|\le C_0$. Suppose that
\[
s_0=\max_X(v-u)>0.
\]
We shall show that
\[
s_0\le C\left(\|h_1-h_2\|_{L^\infty}
+\fint_X(v-u)_+\,dV_\omega\right).
\]

Let $r,R,\delta$ be the constants of Lemma~\ref{lem:pinching} and Remark~\ref{rem:envelope}, and set $\alpha=r\omega$. Applying Lemma~\ref{lem:auxiliary} to densities satisfying
\begin{equation}\label{eq:fixedclass}
\rho\ge\tfrac12,\qquad \fint_X\rho\alpha^n=1,
\qquad \fint_X\rho^2\alpha^n\le4,
\end{equation}
we obtain constants $c_0,C_\psi$, which will be fixed for the rest of the proof. Next, we choose $\kappa>0$ such that
\begin{equation}\label{eq:kappachoice}
2C_0\kappa\le\tfrac14,\qquad
\kappa C_0\le\tfrac1{16},\qquad
\tfrac12\kappa C_\psi\le\tfrac1{16}.
\end{equation}
We then choose $M\ge1$ large enough so that
\begin{equation}\label{eq:Mchoice}
2^{-n}c_0Mr^n>R^n,
\end{equation}
and finally $\varepsilon_0>0$ small enough so that
\begin{equation}\label{eq:epschoice}
\frac{16}{7}\varepsilon_0\le
\min\left\{\frac1{2M},\frac1{M^2}\right\}.
\end{equation}
All these constants depend only on the prescribed data. We stress that the bounds for the auxiliary equation are fixed before $M$ is chosen.

The desired estimate holds unless both of the following inequalities are satisfied:
\begin{equation}\label{eq:smallratios}
\|h_1-h_2\|_{L^\infty}\le\kappa\delta s_0,
\qquad
\fint_X(v-u)_+\,dV_\omega\le\varepsilon_0s_0.
\end{equation}
We assume these inequalities and derive a contradiction. Set $t=\kappa s_0$. Since $s_0\le2C_0$, we have $0<t\le1/4$. Consider the superlevel set
\[
\Omega_{s_0/2}=\{x\in X:(1-t)v(x)-u(x)>s_0/2\}.
\]
On this set, we have
\[
v-u=(1-t)v-u+tv>\frac{s_0}{2}-tC_0\ge\frac{7s_0}{16}.
\]
It follows that
\begin{equation}\label{eq:volE}
\frac{\Vol_\omega(\Omega_{s_0/2})}{\Vol_\omega(X)}
\le\frac{16}{7s_0}\fint_X(v-u)_+\,dV_\omega
\le\frac{16}{7}\varepsilon_0.
\end{equation}

Fix a smooth function $\zeta:\R\to[0,1]$ which vanishes on $(-\infty,1/2]$ and is equal to one on $[3/4,\infty)$, and define
\[
\rho=b+M\zeta\left(\frac{(1-t)v-u}{s_0}\right),
\qquad
b=1-M\fint_X\zeta\left(\frac{(1-t)v-u}{s_0}\right)dV_\omega.
\]
The constant $b$ is chosen so that the average of $\rho$ is equal to $1$. Since the cutoff function is supported in $\Omega_{s_0/2}$, it follows from \eqref{eq:epschoice} and \eqref{eq:volE} that
\[
\frac12\le b\le1,\qquad
\fint_X\rho^2\alpha^n
\le2+2M^2\frac{\Vol_\omega(\Omega_{s_0/2})}{\Vol_\omega(X)}\le4.
\]
Here we have used the fact that the averages with respect to $\alpha^n$ and $\omega^n$ coincide, since $\alpha=r\omega$. Thus $\rho$ satisfies \eqref{eq:fixedclass}. We now solve
\[
(\alpha+\ddbar\psi)^n=c\rho\alpha^n,
\qquad \alpha+\ddbar\psi>0,\qquad \sup_X\psi=0.
\]
By Lemma~\ref{lem:auxiliary}, we have $c\ge c_0$ and $-C_\psi\le\psi\le0$.

We apply the maximum principle to the function
\[
\Phi=(1-t)v-u+\frac{t}{2}\psi.
\]
If $x_*$ is a point where $v-u=s_0$, then
\[
\Phi(x_*)\ge s_0-tC_0-\frac{t}{2}C_\psi\ge\frac{7s_0}{8}.
\]
Therefore, at a maximum point $x_0$ of $\Phi$, the inequality $\psi\le0$ implies
\[
(1-t)v(x_0)-u(x_0)\ge\frac{7s_0}{8}.
\]
In particular, the cutoff function is equal to one at $x_0$, and $\rho(x_0)\ge M$.

At this point, the second derivative test gives, for the $(1,1)$-form
\[
P=\alpha+\ddbar\left(\frac{u-(1-t)v}{t}\right),
\]
the inequality
\begin{equation}\label{eq:positiveP}
P\ge\tfrac12(\alpha+\ddbar\psi)+\tfrac12\alpha
\ge\tfrac12(\alpha+\ddbar\psi)>0.
\end{equation}
Next, we observe that
\begin{equation}\label{eq:matrixidentity}
\theta+\ddbar u
=t\bigl[(\theta-\alpha)+P\bigr]+(1-t)(\theta+\ddbar v).
\end{equation}
Both forms on the right-hand side are admissible: indeed, $\theta-\alpha$ is admissible by the choice of $r$, and admissibility is preserved by adding $P>0$. By concavity and \eqref{eq:smallratios}, we then obtain
\begin{equation}\label{eq:concavecomparison}
\begin{aligned}
F\bigl(\omega^{-1}[(\theta-\alpha)+P]\bigr)
&\le h_2+\frac{h_1-h_2}{t}\\
&\le H(x_0)+\frac{\|h_1-h_2\|_{L^\infty}}{t}
\le H(x_0)+\delta.
\end{aligned}
\end{equation}
Lemma~\ref{lem:pinching} then implies $P\le R\omega$ at $x_0$. On the other hand, the auxiliary equation and \eqref{eq:positiveP} give
\[
\frac{P^n}{\omega^n}(x_0)
\ge2^{-n}c\rho(x_0)r^n
\ge2^{-n}c_0Mr^n>R^n,
\]
which is a contradiction. Hence at least one of the inequalities in \eqref{eq:smallratios} must fail, which proves the desired one-sided estimate. Interchanging the roles of $u_1,u_2$ and absorbing the fixed volume factor, we obtain \eqref{eq:stability}.
\end{proof}

We note that the linear dependence on the $L^1$ norm of the difference is a consequence of keeping $M$ fixed. The construction only requires that the volume of the superlevel set, which is bounded by $s_0^{-1}\fint_X(v-u)_+\,dV_\omega$, be small; the strict sup-slope condition then provides the matrix bound which contradicts the auxiliary Monge-Amp\`ere equation.

\subsection{Uniform continuity from stability}
We can now apply Proposition~\ref{prop:stability} to establish the uniform continuity of normalized admissible solutions.

\begin{lemma}\label{lem:C0compact}
Fix the structural and common subsolution data above. If the right-hand sides form an equicontinuous family with $0<h\le H_0$, then the normalized solutions form a precompact family in $C^0(X)$, and they admit a common modulus of continuity, which depends only on the prescribed data and on a common modulus of continuity for $h$. In particular, this applies to the family in Theorem~\ref{thm:gradient}, even when the right-hand sides tend to zero.
\end{lemma}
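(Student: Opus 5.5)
The plan is to combine the $L^1$ precompactness of Lemma~\ref{lem:L1compact} with the stability estimate of Proposition~\ref{prop:stability}, and then to pass from precompactness to a common modulus of continuity.

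\emph{Step 1: precompactness in $C^0$.} Take a sequence $u_j$ of normalized solutions with right-hand sides $h_j$. These $h_j$ are uniformly bounded by $H_0$ and equicontinuous. By Arzel\`a-Ascoli we pass to a subsequence along which $h_j$ converges uniformly. By Lemma~\ref{lem:L1compact} we may also assume that $u_j$ converges in $L^1$. Proposition~\ref{prop:stability} applies to every pair $(u_j,u_k)$, since all of them satisfy \eqref{eq:strict} with the same $\ub,\delta_0$. It gives
\[
\|u_j-u_k\|_{L^\infty}\le C\bigl(\|h_j-h_k\|_{L^\infty}+\|u_j-u_k\|_{L^1}\bigr),
\]
with $C$ independent of $\inf h_j$. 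The right-hand side tends to zero, so $(u_j)$ is uniformly Cauchy. Hence the family is precompact in $C^0(X)$.

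\emph{Step 2: a common modulus of continuity.} A precompact subset of $C^0(X)$ is equicontinuous; this is the easy direction of Arzel\`a-Ascoli. Spelled out, suppose that for some $\varepsilon>0$ there were solutions $u_j$ and points $x_j,y_j$ with $d(x_j,y_j)\to0$ but $|u_j(x_j)-u_j(y_j)|\ge\varepsilon$. By Step 1 a subsequence converges uniformly to a continuous limit $u_\infty$. The uniform continuity of $u_\infty$ then gives a contradiction. So there is a modulus $\sigma$ with $|u(x)-u(y)|\le\sigma(d(x,y))$ for every solution in the family.

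This $\sigma$ depends only on the class of admissible data. That class is determined by $(X,\omega),\chi,f,\Gamma,\ub,H_0,\delta_0$ and the common modulus of continuity of $h$. The reason is that the contradiction argument may be run over the whole class of all equations with such data, not only over the given family. If one wants a more explicit dependence, one could instead compare $u$ with a translate or a local smoothing of itself via the stability estimate. On a general Hermitian manifold, however, translates are not solutions, so I would keep the compactness argument.

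\emph{Step 3: the family of Theorem~\ref{thm:gradient}.} The bound $\|\nabla h\|_{L^\infty}\le H_1$ makes the right-hand sides uniformly Lipschitz, hence equicontinuous with modulus $H_1 s$. No lower bound on $h$ enters anywhere: Lemma~\ref{lem:zeroth}, Lemma~\ref{lem:L1compact} and Proposition~\ref{prop:stability} are all uniform as $\inf h\to0$. So the conclusion holds even when the $h$ tend to zero.

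The only delicate point is making sure every constant is uniform over the whole class. This is exactly what the stated independence of $C$ from derivatives and from lower bounds of $h$ provides. In particular, the constants $r,R,\delta$ of Remark~\ref{rem:envelope} do not depend on the individual $h$.
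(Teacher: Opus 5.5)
Your proposal is correct and follows the paper's proof step for step. Arzel\`a--Ascoli gives uniform convergence of the $h_j$, Lemma~\ref{lem:L1compact} gives $L^1$ precompactness, and Proposition~\ref{prop:stability} then makes a subsequence uniformly Cauchy; the common modulus comes from precompactness over the whole class determined by the data. The only difference is cosmetic: you obtain the modulus by a sequential contradiction argument, whereas the paper uses a finite $\varepsilon/3$-net, and the two are equivalent.
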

\begin{proof}
Let $(u_j,h_j)$ be any sequence. Since the $h_j$ are uniformly bounded and equicontinuous, the Arzel\`a-Ascoli theorem gives a subsequence along which $h_j$ converges uniformly. By Lemma~\ref{lem:L1compact}, we may pass to a further subsequence along which $u_j$ is Cauchy in $L^1$. Proposition~\ref{prop:stability} then implies
\[
\|u_j-u_k\|_{L^\infty}
\le C\bigl(\|h_j-h_k\|_{L^\infty}+\|u_j-u_k\|_{L^1}\bigr)\longrightarrow0.
\]
The constant is uniform, since the same $\ub$ and the same strict gap apply to every pair. This proves the compactness in $C^0$.

The common modulus of continuity is a consequence of precompactness. Indeed, given $\varepsilon>0$, choose a finite $\varepsilon/3$-net for the family of solutions in $C^0(X)$. Since this net consists of finitely many continuous functions, there is a common radius $\rho>0$ on which their oscillations are at most $\varepsilon/3$. By the triangle inequality, $|u(x)-u(y)|\le\varepsilon$ for every solution whenever $d_\omega(x,y)<\rho$. Applying this to the full family determined by the fixed data, we obtain a common modulus of continuity depending only on these data. This completes the proof of the compactness needed below, which relies only on the zeroth order estimate of Guo-Phong, admissibility, and stability.
\end{proof}

\section{The gradient estimate}\label{sec:gradient}
The key step is the following comparison estimate, in which the smooth admissible function $v$ is allowed to vary, while $\ub$ remains the prescribed subsolution. At the end of this section, the compactness established in Section~\ref{sec:continuity} will be used to choose $v$, and Theorem~\ref{thm:gradient} will follow.

\begin{proposition}[Comparison estimate]\label{prop:comparison}
Under the hypotheses of Theorem~\ref{thm:gradient}, there exist $1> \eta>0$ and $C>0$, depending only on the data in that theorem, such that, for every smooth $v\in\E$ with $\|u-v\|_{L^\infty}\le\eta$, we have
\begin{equation}\label{eq:comparison}
\|\nabla u\|_{L^\infty}\le C\bigl(1+\|\nabla v\|_{L^\infty}^2\bigr).
\end{equation}
The radius $\eta$ does not depend on the derivatives of $v$, nor on the distance of $\lambda(A_v)$ from $\partial\Gamma$. In particular, no bound on the second derivatives of $v$ is required.
\end{proposition}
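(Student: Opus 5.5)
The plan is to run a maximum principle argument for a test function whose weight combines the subsolution $\ub$ with the nearby comparison function $v$. I would take
\[
G=\log|\nabla u|_\omega^2+B(\ub-u)+A(u-v-\eta)^2 ,
\]
where $A,B\ge1$ and then $\eta$ are fixed in that order, depending only on the data of Theorem~\ref{thm:gradient}. Lemma~\ref{lem:zeroth} bounds the middle term. The hypothesis $\|u-v\|_{L^\infty}\le\eta$ gives $-2\eta\le u-v-\eta\le0$, so the last term is bounded by $4A\eta^2\le1$. Let $x_0$ be a maximum point of $G$ where $|\nabla u|^2$ is large. Write $L=F^{i\bar j}\nabla_i\nabla_{\bar j}$ for the linearized operator, and use coordinates at $x_0$ in which $\omega=\Id$ and $A_u$ is diagonal.

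First I compute $L\log|\nabla u|^2$. Differentiating $F(A_u)=h$ once and commuting Chern covariant derivatives gives
\[
L\log|\nabla u|^2\ge \frac{\sum_{i,k}F^{i\bar i}\bigl(|u_{ki}|^2+|u_{k\bar i}|^2\bigr)}{|\nabla u|^2}-\frac{\sum_i F^{i\bar i}\,|\nabla_i|\nabla u|^2|^2}{|\nabla u|^4}-C\sum_iF^{i\bar i}-\frac{2H_1}{|\nabla u|}.
\]
The torsion terms, of the form $F^{i\bar i}u_{i\bar i}\cdot T\cdot\nabla u/|\nabla u|^2$, are absorbed into the good $|u_{k\bar i}|^2$ term together with $C\sum F^{i\bar i}$.

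The new ingredient is the comparison term. By concavity, $F^{i\bar j}(A_u-A_v)_{i\bar j}\le F(A_u)-F(A_v)\le H_0$, so $L(u-v)\le H_0$. Together with $u-v-\eta\le0$ this gives
\[
L\,A(u-v-\eta)^2\ge 2A\,F^{i\bar i}|(u-v)_i|^2-4A\eta H_0
\ge A\,F^{i\bar i}|u_i|^2-2A\,F^{i\bar i}|v_i|^2-4A\eta H_0 .
\]
This uses only the gradient of $v$ and the value $h$, never the second derivatives of $v$ or the distance of $\lambda(A_v)$ from $\partial\Gamma$. That is exactly why $\eta$ can be chosen independently of $v$.

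Next I use the first order condition $\nabla G(x_0)=0$ to express $\nabla|\nabla u|^2/|\nabla u|^2$ as $-B\nabla(\ub-u)-2A(u-v-\eta)\nabla(u-v)$. The contribution of the second piece to the negative gradient term is at most $8A^2\eta^2F^{i\bar i}|(u-v)_i|^2$. This is at most half of the favorable term $2AF^{i\bar i}|(u-v)_i|^2$ once $\eta\le(4\sqrt A)^{-1}$, which is how $\eta$ is chosen.

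The contribution of the $B$ piece is the classical difficulty. I would treat it as in Sz\'ekelyhidi and Hou-Ma-Wu, by splitting $|\nabla_i|\nabla u|^2|^2$ and using the good term $\sum_kF^{i\bar i}|u_{ki}|^2/|\nabla u|^2$. If that proves insufficient, I would replace $B(\ub-u)$ by a concave function $\varphi(\ub-u)$ with $\varphi''<0$ large relative to $\varphi'^2$.

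Then I apply the algebraic alternative provided by Lemma~\ref{lem:quantC}, with the margin $2d$ and the radius $R_{\mathrm{sub}}$. Either $L(\ub-u)\ge\theta\sum_iF^{i\bar i}$, or $F^{i\bar i}\ge\theta\sum_kF^{k\bar k}$ for every $i$.
\begin{itemize}
\item In the first case, the term $B\theta\sum F^{i\bar i}$ absorbs $C\sum F^{i\bar i}$ once $B$ is large.
\item In the second case, the favorable term gives $A\theta\sum_kF^{k\bar k}\,(|\nabla u|^2-2|\nabla v|^2)$. This dominates the error terms unless $|\nabla u|^2\le C(1+|\nabla v|^2)$, which is stronger than \eqref{eq:comparison}.
\end{itemize}
The remaining term $2H_1/|\nabla u|$ requires a lower bound $\sum_iF^{i\bar i}\ge\tau>0$ on $\{f\le H_0\}$. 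I would obtain this from concavity and the growth condition $f(t\lambda)\to\infty$ in \eqref{eq:structure}, uniformly in $\inf h$. Finally, the value of $G$ at $x_0$ controls $G$ everywhere, and the bounded weights convert this into \eqref{eq:comparison}.

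I expect the main obstacle to be the bookkeeping at $x_0$: the cross terms between the $B$-weight and the $A$-weight in the squared first order condition, and the torsion terms. These must be absorbed without $\eta$ ever depending on $\|\nabla v\|$. The safeguard is that every appearance of $v$ is either through $|\nabla v|^2$ multiplied by $\sum F^{i\bar i}$, which only enters the final threshold, or through the factor $A\eta^2$, which is fixed by the data.
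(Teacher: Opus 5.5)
Your test function is, at a critical point, equivalent to the paper's test function $e^{\xi}(1+|\nabla u|_\omega^2)$ with $\xi=B(\ub-u)+A(u-v-\eta)^2$. Your treatment of the comparison term, the torsion, the lower bound for $\mathcal T=\sum_iF^{i\bar i}$ and the subsolution alternative is sound. The gap is the step you call ``the classical difficulty''. It is the entire content of the proposition, and it cannot be delegated to Sz\'ekelyhidi or Hou--Ma--Wu: those works bound the Hessian in terms of the gradient and reach the gradient bound only by blow-up.

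The route you do describe fails. Write $|\partial q|_F^2=\sum_iF^{i\bar i}|q_i|^2$. After substituting $\nabla|\nabla u|^2/|\nabla u|^2=-\nabla\xi$ into the whole negative gradient term, you must absorb $-B^2|\partial(\ub-u)|_F^2$. The only available term is the comparison term, via $\ub-u=(v-u)+(\ub-v)$. This leaves an error of size $2B^2\|\nabla(v-\ub)\|_{L^\infty}^2\mathcal T$ with no factor $|\nabla u|^{-2}$. In the first alternative it competes at the same order with $B\theta\mathcal T$, so you would need $B\|\nabla(v-\ub)\|_{L^\infty}^2\lesssim\theta$. Then $B$, $A$ and $\eta$ depend on $\nabla v$, which is exactly what the statement excludes. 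Your first bullet omits this term, and your claimed bound $|\nabla u|^2\le C(1+|\nabla v|^2)$ reflects only the second alternative.

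The other options you mention do not close either. A Hou--Ma--Wu type splitting $|\partial_i|\nabla u|^2|^2\le(1+\epsilon)|\sum_ku_{ki}u_{\bar k}|^2+(1+\epsilon^{-1})|\lambda_iu_i-\sum_k\chi_{i\bar k}u_k|^2$ leaves a loss of up to $(1+\epsilon^{-1})|\nabla u|^{-2}\sum_iF^{i\bar i}\lambda_i^2$. That is more than the $|u_{k\bar i}|^2$ term can absorb. The fallback has the wrong sign: a concave $\varphi$ adds $\varphi''|\partial(\ub-u)|_F^2<0$. A convex $\varphi$ with $\varphi''\ge\varphi'^2$ forces $\varphi'\le1/\osc_X(\ub-u)$, which is too small to beat $C\mathcal T$.

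The missing idea is to use the pure second derivatives to complete a square against $\xi$, rather than inserting the critical point equation into the square. Work with $e^{-\xi}L\bigl(e^{\xi}(1+|\nabla u|^2)\bigr)$, which at a critical point equals $(1+|\nabla u|^2)L\bigl(\log(1+|\nabla u|^2)+\xi\bigr)$. Then
\[
\sum_{i,k}F^{i\bar i}|u_{ki}|^2+2\Rea\sum_{i,k}F^{i\bar i}u_{ki}u_{\bar k}\xi_{\bar i}+|\nabla u|^2|\partial\xi|_F^2=\sum_{i,k}F^{i\bar i}|u_{ki}+u_k\xi_i|^2\ge0,
\]
so the quadratic term in $\partial\xi$ disappears, and what is left of it has a good sign. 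The only dangerous survivor is the linear term $2\Rea\sum_iF^{i\bar i}\lambda_iu_i\xi_{\bar i}$, which contains $-2B\sum_iF^{i\bar i}\lambda_i|u_i|^2$.

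Split $u_i=(u-v)_i+v_i$ both there and in $\xi_i$, as in Lemma~\ref{lem:mixed}, and set $K_v=1+\|\nabla\ub\|_{L^\infty}+\|\nabla v\|_{L^\infty}$. This bounds the cross term by three pieces:
\begin{itemize}
\item $\frac18\sum_iF^{i\bar i}\lambda_i^2$, absorbed by the $|u_{k\bar i}|^2$ term;
\item $C(1+|\nabla u|^2+K_v^2)|\partial(u-v)|_F^2$, absorbed by $2A(1+|\nabla u|^2)|\partial(u-v)|_F^2$;
\item $CK_v^4\mathcal T$.
\end{itemize}
For the second absorption, the coefficient $-B+2A(u-v-\eta)$ of $(u-v)_i$ in $\xi_i$ must be small compared with $\sqrt A$. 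The paper takes $\eta=A^{-1}$ in your notation, so that this coefficient is at most $B+4$.

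The last error $CK_v^4\mathcal T$ is not multiplied by $1+|\nabla u|^2$, whereas the favorable terms in both alternatives are. Hence it is beaten once $1+|\nabla u|^2\ge C(1+K_v^4)$, with $B$, $A$, $\eta$ independent of $v$. This threshold is the source of the square in \eqref{eq:comparison}.
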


We begin by recording the inequalities for the linearized operator which follow from the existence of the subsolution, and then choose a weight adapted to the difference $u-v$.

\subsection{The linearized operator}
We return to the original background form $\chi$, so that the equation is $F(A_u)=h$. We denote by $B=DF(A_u)$ the linearization, and by $L$ the associated linear operator. In a unitary frame in which $A_u$ is diagonal, we have
\[
B=DF(A_u),\quad B_i=f_i(\lambda(A_u)),\quad Lw=\sum_iB_iw_{i\bar i},\quad
\mathcal T=\tr B=\sum_iB_i.
\]
For any Hermitian endomorphism $P$, we denote by $\operatorname{tr}(BP)$ the trace of the product, whether or not $P$ is diagonal in the chosen frame. The arguments in this subsection follow closely those of \cite{Szekelyhidi} (see also \cite{bG}).

\begin{lemma}\label{lem:alternative}
There exist constants $\tau_0,\kappa_0>0$, depending only on the fixed structural data, $H_0$, $\ub$, and $\delta_0$, such that
\begin{equation}\label{eq:linearized}
0\le \operatorname{tr}(BA_u)\le h,\qquad \mathcal T\ge\tau_0,\qquad
L(u-v)\le h-F(A_v)\le h\quad(v\in\E).
\end{equation}
Moreover,
\begin{equation}\label{eq:weakbarrier}
L(\ub-u)\ge-h,
\end{equation}
and at each point, at least one of the alternatives
\begin{equation}\label{eq:alternative}
L(\ub-u)\ge\kappa_0\mathcal T,\qquad B\ge\kappa_0\mathcal T\Id
\end{equation}
holds. The constants do not depend on any lower bound for $h$.
\end{lemma}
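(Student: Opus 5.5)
The proof rests on the concavity inequality
\[
F(A')\le F(A_u)+\operatorname{tr}\bigl(B(A'-A_u)\bigr)=h+\operatorname{tr}\bigl(B(A'-A_u)\bigr),
\]
valid for every Hermitian $A'$ with $\lambda(A')\in\overline\Gamma$ (for boundary points, use the continuity of $f$ on $\overline\Gamma$). Each assertion of \eqref{eq:linearized}--\eqref{eq:alternative} comes from a suitable choice of $A'$.

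\emph{The elementary inequalities.} Take $A'=sA_u$. For $s=0$ we have $0\in\partial\Gamma$, so $f(0)=0$, and the inequality gives $0\le h-\operatorname{tr}(BA_u)$. For $s\to\infty$ it gives $f(s\lambda(A_u))\le h+(s-1)\operatorname{tr}(BA_u)$. The left side tends to $\infty$ by \eqref{eq:structure}, so $\operatorname{tr}(BA_u)\ge0$. Next take $A'=A_v$ with $v\in\E$. Since $\operatorname{tr}(B(A_v-A_u))=L(v-u)$, we get $0<F(A_v)\le h+L(v-u)$, which is the third inequality in \eqref{eq:linearized}. The choice $A'=A_{\ub}$ gives in the same way $L(\ub-u)\ge F(A_{\ub})-h\ge-h$, which is \eqref{eq:weakbarrier}. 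None of these steps uses a lower bound for $h$.

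\emph{The alternative and the lower bound for $\mathcal T$.} Let $r,R,\delta,H$ be as in Lemma~\ref{lem:pinching} and Remark~\ref{rem:envelope}. Fix a point, a unitary frame diagonalizing $A_u$, and an index $k$. Let $E_k$ be the orthogonal projection onto the $k$-th frame vector, and set $t_0=R+1$. Then $P=t_0E_k\ge0$ satisfies $P\not\le R\,\Id$, so \eqref{eq:pinching} forces
\[
F(A_{\ub}-r\Id+t_0E_k)>H+2\delta\ge h+2\delta.
\]
The matrix $A_{\ub}-r\Id+t_0E_k$ is admissible by \eqref{eq:margin}, since $\Gamma+\Gamma_n\subset\Gamma$. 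Applying concavity to it gives
\[
2\delta<\operatorname{tr}\bigl(B(A_{\ub}-A_u)\bigr)-r\mathcal T+t_0B_k,
\qquad\text{that is,}\qquad
L(\ub-u)\ge r\mathcal T-t_0B_k+2\delta .
\]
Two consequences follow.
\begin{itemize}
\item Using $\operatorname{tr}(BA_u)\ge0$ and $\operatorname{tr}(BA_{\ub})\le C_{\ub}\mathcal T$, with $C_{\ub}=\sup_X|A_{\ub}|$, the same inequality yields $(C_{\ub}+t_0)\mathcal T\ge2\delta$. This gives $\mathcal T\ge\tau_0:=2\delta/(C_{\ub}+R+1)$.
\item For the alternative, set $\kappa_0=\min\{r/(2(R+1)),\,r/2\}$. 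If $B_k\ge\kappa_0\mathcal T$ for every $k$, then $B\ge\kappa_0\mathcal T\,\Id$, since $B$ is diagonal in this frame. Otherwise choose $k$ with $B_k<\kappa_0\mathcal T$. The displayed inequality then gives $L(\ub-u)\ge(r-t_0\kappa_0)\mathcal T\ge\tfrac r2\mathcal T\ge\kappa_0\mathcal T$.
\end{itemize}
The constants depend only on $r,R,\delta,\sup|A_{\ub}|$, hence only on the fixed data, and not on $\inf h$.

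The main obstacle is the last step. The standard Sz\'ekelyhidi argument uses normals to the level set $\{f=h\}$ and the ordering of eigenvalues, and its compactness step would degenerate as $h\to0$, because $\lambda(A_u)$ approaches $\partial\Gamma$ and the ratios $f_i/\mathcal T$ are no longer controlled. The plan avoids this by working with matrices, via the rank-one perturbation $t_0E_k$ in the eigenframe of $A_u$. The uniform pinching bound \eqref{eq:pinching}, which is independent of $h$, then supplies the finite length $t_0$. I would double-check two points here: that \eqref{eq:pinching} applies with the non-diagonal $A_{\ub}$ in this frame, and that the concavity inequality is legitimate up to $\partial\Gamma$ when $s=0$.
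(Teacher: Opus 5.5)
Your proof is correct. The two points you flagged are fine. Lemma~\ref{lem:pinching} is stated, and proved via Weyl's inequality, for an arbitrary positive semi-definite $P$, so it applies to $P=(R+1)E_k$ in the eigenframe of $A_u$ even though $A_{\ub}$ is not diagonal there. The tangent-plane inequality also reaches $0\in\partial\Gamma$, either by continuity of $f$ on $\overline\Gamma$ or by testing against $\epsilon A_u$ and using only $f>0$.

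Your route differs from the paper's in the last part.

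\begin{itemize}
\item \emph{Lower bound for $\mathcal T$.} The paper compares with $a_*\Id$, where $F(a_*\Id)>H_0+1$, so its $\tau_0$ depends only on $f$ and $H_0$.
\item \emph{The alternative, paper's version.} The paper splits the levels. For $h\le s_*\tau_0/2$ it uses the admissibility margin $s_*$ of $A_{\ub}$, via \eqref{eq:strongbarrier}, to get the first alternative. For $h\in[s_*\tau_0/2,H_0]$ it invokes Sz\'ekelyhidi's Proposition 6 outside a large ball in eigenvalue space. Inside that ball it uses a compactness argument, and this is the step that needs $h$ bounded below.
\item \emph{The alternative, your version.} You derive one inequality, valid at every level $h\le H$ simultaneously:
\[
L(\ub-u)\ge r\mathcal T-(R+1)B_k+2\delta .
\]
It comes from concavity at $A_u$ together with the contrapositive of \eqref{eq:pinching}. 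From it you read off both conclusions with explicit constants, $\tau_0=2\delta/(\sup|A_{\ub}|+R+1)$ and $\kappa_0=r/(2(R+1))$.
\end{itemize}

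Your argument is essentially Sz\'ekelyhidi's tangent-plane argument. The difference is that $\ub$ is admissible with margin $r$, and the pinching lemma holds for all positive semi-definite perturbations and all levels up to $H$. Together these guarantee that $A_{\ub}-r\Id+(R+1)E_k$ lies strictly above the level set in every unit direction. This is what a mere $\mathcal C$-subsolution would not give, and it is why you need neither the restriction to large $|\lambda(A_u)|$ nor the compactness step that degenerates as $h\to0$.

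What each route buys: yours is self-contained, uniform in $h$ with no case split, and fully explicit. The paper's gives a $\tau_0$ independent of $\ub$ and $\delta_0$ and relies on an established external result. The price is a case split and non-explicit constants in the moderate-level regime.
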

\begin{proof}
Let $P$ be an admissible Hermitian endomorphism. By the convexity of $\Gamma$, the ray $A_u+tP$ is admissible for $t\ge0$. By positivity and concavity, we must have $\operatorname{tr}(BP)\ge0$, since otherwise the tangent plane bound $F(A_u+tP)\le h+t\operatorname{tr}(BP)$ would eventually become negative. Similarly, concavity at $0\in\partial\Gamma$, together with $F(0)=0$, gives $\operatorname{tr}(BA_u)\le h$. Next, choose $a_*>0$ with $F(a_*\Id)>H_0+1$. Then
\[
F(a_*\Id)\le h+\operatorname{tr}\bigl(B(a_*\Id-A_u)\bigr)\le H_0+a_*\mathcal T,
\]
so that $\mathcal T\ge a_*^{-1}$. Applying the tangent plane inequality at $A_u$ to $A_v$, we find $F(A_v)\le h+\operatorname{tr}\bigl(B(A_v-A_u)\bigr)$, which proves \eqref{eq:linearized}. Finally, \eqref{eq:weakbarrier} follows by applying $\operatorname{tr}(BP)\ge0$ to $P=A_{\ub}$.

In order to establish the alternative uniformly as $h$ tends to zero, we consider first small values of $h$. Since the fixed function $\ub$ is admissible, there exists $s_*>0$ such that $A_{\ub}-s_*\Id$ is admissible everywhere. Hence
\begin{equation}\label{eq:strongbarrier}
L(\ub-u)=\operatorname{tr}(BA_{\ub})-\operatorname{tr}(BA_u)\ge s_*\mathcal T-h.
\end{equation}
Thus, when $h\le s_*\tau_0/2$, the first alternative holds with $\kappa_0=s_*/2$. For levels $h\in[s_*\tau_0/2,H_0]$, Lemma~\ref{lem:pinching} provides the uniform bound on translated level sets which is required in \cite[Proposition 6]{Szekelyhidi}. More precisely, after shrinking $d>0$ if necessary, the sets
\[
(\mu(x)-2d\one+\Gamma_n)\cap\{f=h(x)\}
\]
are uniformly bounded, as can be seen by taking $2d=r$ and using \eqref{eq:pinching}. That proposition then gives \eqref{eq:alternative} outside a fixed ball in the space of eigenvalues, with uniform constants on this compact interval of positive levels. The eigenvalues remaining in that ball lie in a compact subset of $\Gamma$, since $f\ge s_*\tau_0/2$ and $f|_{\partial\Gamma}=0$, and the positivity and continuity of the $f_i$ give $B_i/\mathcal T\ge\kappa_0>0$ there. The lemma follows by taking the minimum of the resulting constants.
\end{proof}
\begin{remark}
Homogeneity of degree one would imply $\operatorname{tr}(BA_u)=h$, but the weaker inequalities above suffice for our purposes. We note also that the subsolution alternative does not provide a lower bound for $\det B$. For example, the operator $f(\lambda_1,\lambda_2)=2\lambda_1\lambda_2/(\lambda_1+\lambda_2)$ on the positive cone satisfies $\det DF(\operatorname{diag}(R,1))=4R^2/(R+1)^4\to0$.
\end{remark}

\subsection{Differentiating the equation}
Let $\nabla$ denote the Chern connection of $\omega$, and write $u_{ki}=\nabla_i u_k$ and $u_{i\bar j}=\partial_i\partial_{\bar j}u$. Note that the pure second derivatives $u_{ki}$ need not be symmetric. At the point under consideration, we choose holomorphic coordinates such that $g_{i\bar j}=\delta_{ij}$ and $A_u$ is diagonal; the first derivatives of $g$ need not vanish at that point. Then
\begin{equation}\label{eq:diagonal}
u_{i\bar j}=\lambda_i\delta_{ij}-\chi_{i\bar j}.
\end{equation}
We begin by estimating the linearized operator applied to the square of the gradient.

\begin{lemma}\label{lem:unweighted}
There exists a constant $C_g>0$, depending only on the Hermitian metric $\omega$ and on $\chi$, such that
\begin{equation}\label{eq:unweighted}
\begin{aligned}
L|\nabla u|_\omega^2\ge{}&\sum_{i,k}B_i|u_{ki}|^2
+\frac14\sum_iB_i\lambda_i^2\\
&-C_g\mathcal T\bigl(1+|\nabla u|_\omega^2\bigr)
-2H_1|\nabla u|_\omega.
\end{aligned}
\end{equation}
\end{lemma}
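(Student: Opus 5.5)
We prove \eqref{eq:unweighted} by the standard computation at a point $x$, in the coordinates fixed above: $g_{i\bar j}=\delta_{ij}$ at $x$ and $A_u$ diagonal. We write
\[
|\nabla u|^2_\omega=g^{k\bar l}u_ku_{\bar l}.
\]
Applying $L=\sum_iB_i\partial_i\partial_{\bar i}$ and expanding, we get
\[
L|\nabla u|^2_\omega=\sum_{i,k}B_i\bigl(|u_{ki}|^2+|u_{k\bar i}|^2\bigr)+2\Rea\sum_{i,k}B_iu_{k i\bar i}u_{\bar k}+\mathcal E.
\]
Here $u_{ki}$ and $u_{k\bar i}$ are Chern covariant derivatives. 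The error $\mathcal E$ collects terms with derivatives of $g$ and torsion. Since $\omega$ is not Kähler, first derivatives of $g$ persist at $x$. Each error term has the form $B_i$ times (a metric coefficient) times one of
\[
|\nabla u|^2,\quad |\nabla u|\,|u_{ki}|,\quad |\nabla u|\,|u_{k\bar i}|.
\]
The coefficients are bounded by a constant depending only on $\omega$.

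\emph{Third-order term.} First commute covariant derivatives:
\[
u_{ki\bar i}=u_{i\bar i k}+T*\nabla^2u+R*\nabla u,
\]
with torsion $T$ and curvature $R$ of the Chern connection. Next differentiate the equation $F(A_u)=h$ in the direction $\partial_k$. With $A_u=\omega^{-1}\chi_u$, this gives
\[
\sum_iB_i\bigl(u_{i\bar ik}+\nabla_k\chi_{i\bar i}\bigr)=h_k,
\]
up to terms from $\nabla_k g$, which the Chern connection absorbs. Hence
\[
2\Rea\sum_{i,k}B_iu_{i\bar ik}u_{\bar k}\ge-2|\nabla h|\,|\nabla u|-C\mathcal T|\nabla u|.
\]
The gradient term is at least $-2H_1|\nabla u|_\omega$, which is exactly the last term in \eqref{eq:unweighted}. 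The $C\mathcal T|\nabla u|$ piece is absorbed into $C_g\mathcal T(1+|\nabla u|^2)$. This is why only $\|\nabla h\|_{L^\infty}$ enters, and why no concavity term is needed: we only differentiate once.

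\emph{Mixed terms.} By \eqref{eq:diagonal}, $u_{i\bar i}=\lambda_i-\chi_{i\bar i}$. This gives
\[
\sum_{i,k}B_i|u_{k\bar i}|^2\ge\sum_iB_i|u_{i\bar i}|^2\ge\tfrac12\sum_iB_i\lambda_i^2-C\mathcal T.
\]
The error and commutation terms are of two types:
\[
B_i|\nabla u|\,|u_{ki}|\quad\text{and}\quad B_i|\nabla u|\,|u_{k\bar i}|\ \text{(from torsion)}.
\]
We control them by Cauchy–Schwarz:
\[
B_i|\nabla u||u_{ki}|\le\epsilon B_i|u_{ki}|^2+C_\epsilon B_i|\nabla u|^2,
\]
and the same for $u_{k\bar i}$, absorbing into $\sum B_i|u_{k\bar i}|^2$.

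\emph{Main obstacle.} The stated inequality keeps the \emph{full} coefficient $1$ in front of $\sum B_i|u_{ki}|^2$, so no $\epsilon$-loss on that term is allowed. We get it back by spending the $|u_{k\bar i}|^2$ budget: starting from $\sum B_i|u_{k\bar i}|^2\ge\frac12\sum B_i\lambda_i^2-C\mathcal T$, we only need $\frac14$ in the end. The troublesome products involving $u_{ki}$ arise from $\partial g$ terms. We handle them by grouping: the expression
\[
|u_{ki}+\Gamma*\nabla u|^2=|\partial_i\partial_k u\text{-type}|^2
\]
is expanded exactly, so the cross term with $u_{ki}$ appears as part of a perfect square. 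Equivalently, we write the expansion directly in Chern covariant form. In that form the only first-order-in-$u_{ki}$ contributions come from the commutator $u_{ki\bar i}-u_{i\bar ik}$, and that commutator involves $T*u_{\cdot\bar\cdot}$ and $R*\nabla u$, not $u_{ki}$. If a residual $T*u_{ki}$ term survives, we absorb it using $\epsilon$ together with a small reduction of the coefficient, and adjust the constant $\frac14$ accordingly. The claim as stated then follows with $C_g$ depending only on $\omega,\chi$.
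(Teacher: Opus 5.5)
Your outline follows the paper's route: expand $L|\nabla u|_\omega^2$ in Chern covariant derivatives, differentiate the equation once, bound the torsion term by Cauchy--Schwarz against half of $\sum_{i,k}B_i|u_{k\bar i}|^2$, and use $|u_{i\bar i}|^2\ge\frac12\lambda_i^2-|\chi_{i\bar i}|^2$ on the other half. The problem is that the decisive point is left unverified, and your text is inconsistent about it. You do eventually assert the right fact, namely that the commutator involves $T*u_{\cdot\bar\cdot}$ and $R*\nabla u$ but not $u_{ki}$. Earlier, however, you say that $\mathcal E$ contains $\partial g$-terms of the form $B_i|\nabla u|_\omega|u_{ki}|$ and absorb them with an $\epsilon$-loss. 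You also end with a fallback: a residual $T*u_{ki}$ term absorbed ``by a small reduction of the coefficient'' while ``adjusting the constant $\frac14$''.

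That fallback does not prove the statement. The $\frac14$ multiplies $\lambda_i^2$, which comes from the mixed derivatives and cannot pay for anything involving $u_{ki}$. A coefficient below $1$ in front of $\sum_{i,k}B_i|u_{ki}|^2$ is exactly what \eqref{eq:unweighted} excludes. The full coefficient is what allows Lemma~\ref{lem:weighted} to form the square $\sum_{i,k}B_i|u_{ki}+u_k\xi_i|^2$.

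What closes this is a short explicit computation, which the paper records as \eqref{eq:commutation}. The Chern connection is metric and has no mixed Christoffel symbols. Hence $\partial_i\partial_{\bar i}(g^{k\bar l}u_ku_{\bar l})$ expands exactly into $\sum_k(|u_{ki}|^2+|u_{k\bar i}|^2)$ plus third-order terms in which $\nabla_i,\nabla_{\bar i}$ act on $\partial u$ or $\bar\partial u$, with no leftover $\partial g$ terms. Bringing these to the form $2\Rea\sum_ku_{\bar k}\nabla_{\bar i}\nabla_iu_k$ costs only curvature, because the Chern torsion has no $(1,1)$-component. Next, $\nabla_{\bar j}\nabla_iu_k=\partial_{\bar j}(\partial_i\partial_ku-\Gamma^p_{ik}u_p)$ and $\nabla_ku_{i\bar j}=\partial_ku_{i\bar j}-\Gamma^p_{ki}u_{p\bar j}$ give
\[
\nabla_{\bar j}\nabla_iu_k-\nabla_ku_{i\bar j}
=-(\partial_{\bar j}\Gamma^p_{ik})u_p-(\Gamma^p_{ik}-\Gamma^p_{ki})u_{p\bar j}.
\]
So the torsion multiplies only mixed second derivatives, and no term linear in $u_{ki}$ appears anywhere. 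Once you state and verify this, $\mathcal E$ is purely of curvature type, with $|\mathcal E|\le C\mathcal T|\nabla u|_\omega^2$. You can then delete both the $\epsilon$-absorption of $B_i|\nabla u|_\omega|u_{ki}|$ and the fallback, and your remaining estimates give \eqref{eq:unweighted} exactly as in the paper.
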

\begin{proof}
Let $T_{ik}^p=\Gamma_{ik}^p-\Gamma_{ki}^p$ be the torsion of the Chern connection, and recall that $R^p{}_{i\bar j k}=-\partial_{\bar j}\Gamma_{ik}^p$. Differentiating $\nabla_i u_k=\partial_i\partial_k u-\Gamma_{ik}^p u_p$, we obtain the commutation formula
\begin{equation}\label{eq:commutation}
\nabla_{\bar j}\nabla_i u_k
=\nabla_k u_{i\bar j}+R^p{}_{i\bar j k}u_p-T_{ik}^p u_{p\bar j}.
\end{equation}
Differentiating the equation \eqref{eqn:1.1} once gives
\[
\sum_iB_i\nabla_k u_{i\bar i}=h_k-\sum_iB_i\nabla_k\chi_{i\bar i}.
\]
Using the product rule for the squared norm, together with \eqref{eq:commutation} and its conjugate, we find
\begin{equation}\label{eq:bochner}
\begin{aligned}
L|\nabla u|_\omega^2={}&\sum_{i,k}B_i|u_{ki}|^2
+\sum_{i,k}B_i|u_{k\bar i}|^2
+2\Rea\sum_kh_ku_{\bar k}\\
&-2\Rea\sum_{i,k}B_i(\nabla_k\chi_{i\bar i})u_{\bar k}\\
&-2\Rea\sum_{i,k,p}B_iT_{ik}^pu_{p\bar i}u_{\bar k}
+\mathcal E_\omega.
\end{aligned}
\end{equation}
Here the curvature term satisfies $|\mathcal E_\omega|\le C\mathcal T|\nabla u|_\omega^2$. The torsion term involves only the mixed second derivatives, and can be estimated as follows:
\[
C|\nabla u|_\omega\mathcal T^{1/2}
\left(\sum_{i,k}B_i|u_{k\bar i}|^2\right)^{1/2}
\le\frac12\sum_{i,k}B_i|u_{k\bar i}|^2+C\mathcal T|\nabla u|_\omega^2.
\]
The term involving $\nabla\chi$ in \eqref{eq:bochner} is bounded by $C\mathcal T|\nabla u|_\omega$. Finally, \eqref{eq:diagonal} implies
\[
\sum_{i,k}B_i|u_{k\bar i}|^2\ge\frac12\sum_iB_i\lambda_i^2-C\mathcal T.
\]
Combining these inequalities, we obtain \eqref{eq:unweighted}. Note that we have retained the full term involving the pure second derivatives, which will be needed to complete a square once the weight is introduced.
\end{proof}

\begin{lemma}\label{lem:weighted}
For any real smooth function $\xi$, the weighted gradient satisfies
\begin{equation}\label{eq:weighted}
\begin{aligned}
&e^{-\xi}L\bigl(e^\xi(1+|\nabla u|_\omega^2)\bigr)\\
&\quad\ge\sum_{i,k}B_i|u_{ki}+u_k\xi_i|^2
+\frac14\sum_iB_i\lambda_i^2
+\bigl(1+|\nabla u|_\omega^2\bigr)L\xi\\
&\qquad+\sum_iB_i|\xi_i|^2
+2\Rea\sum_iB_i\lambda_iu_i\xi_{\bar i}
-2\Rea\sum_{i,k}B_i\chi_{i\bar k}u_k\xi_{\bar i}\\
&\qquad-C_g\mathcal T\bigl(1+|\nabla u|_\omega^2\bigr)
-2H_1|\nabla u|_\omega.
\end{aligned}
\end{equation}
\end{lemma}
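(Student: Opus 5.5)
The inequality \eqref{eq:weighted} should follow from the product rule for $L$ together with Lemma~\ref{lem:unweighted}; no new analytic input is needed. Write $Q=1+|\nabla u|_\omega^2$. At the point under consideration I work in the frame of \eqref{eq:diagonal}, and I take all derivatives of the scalar functions $Q$ and $\xi$ as Chern covariant derivatives, so that the non-vanishing first derivatives of $g$ cause no trouble. Since $L$ is a second order operator without zeroth order term, with $Lw=\sum_iB_iw_{i\bar i}$, differentiating $e^\xi Q$ twice gives the exact identity
\[
e^{-\xi}L\bigl(e^\xi Q\bigr)=LQ+Q\,L\xi+Q\sum_iB_i|\xi_i|^2+2\Rea\sum_iB_i\xi_{\bar i}Q_i .
\]
Here I use that $B$ is real and diagonal in this frame, so that the two mixed terms combine into a real part.

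The next step is to expand $Q_i$. Since $\nabla$ is metric, $Q_i=\sum_k\bigl(u_{ki}u_{\bar k}+u_k\nabla_iu_{\bar k}\bigr)$. Moreover $\nabla_iu_{\bar k}=u_{i\bar k}=\lambda_i\delta_{ik}-\chi_{i\bar k}$ by \eqref{eq:diagonal}. Substituting, the cross term splits into three pieces:
\[
2\Rea\sum_{i,k}B_iu_{ki}u_{\bar k}\xi_{\bar i}
+2\Rea\sum_iB_i\lambda_iu_i\xi_{\bar i}
-2\Rea\sum_{i,k}B_i\chi_{i\bar k}u_k\xi_{\bar i}.
\]
The last two pieces are exactly the terms appearing in \eqref{eq:weighted}. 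For the first piece I complete the square:
\[
\sum_{i,k}B_i|u_{ki}+u_k\xi_i|^2=\sum_{i,k}B_i|u_{ki}|^2+2\Rea\sum_{i,k}B_iu_{ki}u_{\bar k}\xi_{\bar i}+|\nabla u|_\omega^2\sum_iB_i|\xi_i|^2 .
\]
Splitting $Q\sum_iB_i|\xi_i|^2=\sum_iB_i|\xi_i|^2+|\nabla u|_\omega^2\sum_iB_i|\xi_i|^2$, the second part is consumed by this square. This leaves the term $\sum_iB_i|\xi_i|^2$ displayed in \eqref{eq:weighted}.

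Finally, I insert Lemma~\ref{lem:unweighted} for $LQ=L|\nabla u|_\omega^2$. Its term $\sum_{i,k}B_i|u_{ki}|^2$ is exactly the one absorbed into the completed square above. The remaining terms, namely $\frac14\sum_iB_i\lambda_i^2$, $-C_g\mathcal T Q$ and $-2H_1|\nabla u|_\omega$, carry over unchanged; note that $C_g\mathcal T(1+|\nabla u|^2_\omega)$ is literally $C_g\mathcal TQ$. This is precisely why that lemma retained the full pure second derivative term.

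The only delicate point is bookkeeping. One must check that the conjugation indices in the square ($\overline{u_k\xi_i}=u_{\bar k}\xi_{\bar i}$) match those in the expansion of $Q_i$. One must also confirm that $\nabla_iu_{\bar k}=u_{i\bar k}$ holds for the Chern connection on a function, which it does because the mixed covariant second derivatives of a function carry no Christoffel symbols. Once this is checked, the inequality is an identity plus Lemma~\ref{lem:unweighted}, with no loss in the constants.
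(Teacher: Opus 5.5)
Your proposal is correct and follows the paper's proof. You apply the product rule for $L$ to $e^\xi(1+|\nabla u|_\omega^2)$ and expand $\partial_i|\nabla u|_\omega^2$ using \eqref{eq:diagonal}. You then complete the square, absorbing $|\nabla u|_\omega^2\sum_iB_i|\xi_i|^2$ into $\sum_{i,k}B_i|u_{ki}+u_k\xi_i|^2$, and finally invoke Lemma~\ref{lem:unweighted}.
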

\begin{proof}
By \eqref{eq:diagonal}, we have at the chosen point
\[
\partial_i|\nabla u|_\omega^2
=\sum_ku_{ki}u_{\bar k}+\lambda_i u_i-\sum_k\chi_{i\bar k}u_k.
\]
On the other hand, the product rule gives
\[
\begin{aligned}
&e^{-\xi}L\bigl(e^\xi(1+|\nabla u|_\omega^2)\bigr)\\
&\quad=L|\nabla u|_\omega^2
+\bigl(1+|\nabla u|_\omega^2\bigr)L\xi\\
&\qquad+\bigl(1+|\nabla u|_\omega^2\bigr)\sum_iB_i|\xi_i|^2
+2\Rea\sum_iB_i\bigl(\partial_i|\nabla u|_\omega^2\bigr)\xi_{\bar i}.
\end{aligned}
\]
Substituting the preceding identity, and combining the terms involving $u_{ki}$ with $|\nabla u|_\omega^2\sum_iB_i|\xi_i|^2$, we obtain $\sum_{i,k}B_i|u_{ki}+u_k\xi_i|^2$; the remaining part of the weight term is $\sum_iB_i|\xi_i|^2$. The lemma now follows from Lemma~\ref{lem:unweighted}.
\end{proof}

\subsection{Choice of the weight}
The weight consists of two parts: the first involves the fixed subsolution, and the second the small difference between $u$ and $v$. Their coefficients are kept separate, so that the required distance between $u$ and $v$ can be chosen independently of the derivatives of $v$.
For parameters $a\ge1$, $\beta>1$ to be determined later, assume that $\|u-v\|_{L^\infty}\le\beta^{-1}$, and set
\begin{equation}\label{eq:weight}
\begin{gathered}
w=u-v-\beta^{-1},\qquad \xi=a(\ub-u)+\beta w^2,\\
K_v=1+\|\nabla\ub\|_{L^\infty}+\|\nabla v\|_{L^\infty}.
\end{gathered}
\end{equation}
Thus $-2\beta^{-1}\le w\le0$. For a real function $q$, we write
\[
|\partial q|_B^2:=\sum_iB_i|q_i|^2.
\]
In particular, the second derivatives of $\beta w^2$ produce the positive term $2\beta|\partial(u-v)|_B^2$.

\begin{lemma}\label{lem:mixed}
There exists a constant $C_a>0$, depending on $a$ and the fixed background data, but independent of $\beta$ and of the derivatives of $v$, such that the following inequalities hold:
\begin{equation}\label{eq:mixedlambda}
\begin{aligned}
\left|2\Rea\sum_iB_i\lambda_i u_i\xi_{\bar i}\right|
\le{}&\frac18\sum_iB_i\lambda_i^2+C_aK_v^4\mathcal T\\
&+C_a\bigl(1+|\nabla u|_\omega^2+K_v^2\bigr)
|\partial(u-v)|_B^2,
\end{aligned}
\end{equation}
\begin{equation}\label{eq:mixedchi}
\begin{aligned}
\left|2\Rea\sum_{i,k}B_i\chi_{i\bar k}u_k\xi_{\bar i}\right|
\le{}&\tfrac12\bigl(1+|\nabla u|_\omega^2\bigr)
|\partial(u-v)|_B^2\\
&+C_a\mathcal T
+C_aK_v\bigl(1+|\nabla u|_\omega^2\bigr)^{1/2}\mathcal T.
\end{aligned}
\end{equation}
\end{lemma}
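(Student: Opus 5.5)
The plan is to expand $\xi_i$ and rewrite it in terms of $(u-v)_i$, which is the quantity the favorable term $|\partial(u-v)|_B^2$ controls, plus a part bounded by $K_v$. Everything else is Cauchy--Schwarz with small weights. From \eqref{eq:weight} we have
\[
\xi_i=a(\ub_i-u_i)+2\beta w\,(u-v)_i .
\]
The crucial observation is that $-2\beta^{-1}\le w\le 0$ gives $|2\beta w|\le 4$, uniformly in $\beta$. Writing $\ub_i-u_i=(\ub-v)_i-(u-v)_i$, we get
\[
\xi_i=a(\ub-v)_i+c\,(u-v)_i,\qquad |c|\le a+4,\qquad |(\ub-v)_i|\le K_v .
\]
So $|\xi_i|\le aK_v+(a+4)|(u-v)_i|$. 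No derivative of $v$ beyond $\|\nabla v\|_{L^\infty}$ enters, and $\beta$ has disappeared. This is why $C_a$ is independent of $\beta$ and of the higher derivatives of $v$.

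For \eqref{eq:mixedlambda}, we bound the sum pointwise in $i$ by $2\sum_iB_i|\lambda_i||u_i|\bigl(aK_v+(a+4)|(u-v)_i|\bigr)$. There are two pieces.
\begin{itemize}
\item In the piece carrying $aK_v$, we split $u_i=v_i+(u-v)_i$, so $|u_i|\le K_v+|(u-v)_i|$. This gives $2aK_v^2B_i|\lambda_i|\le\frac1{32}B_i\lambda_i^2+C_aK_v^4B_i$ and $2aK_vB_i|\lambda_i||(u-v)_i|\le\frac1{32}B_i\lambda_i^2+C_aK_v^2B_i|(u-v)_i|^2$.
\item In the piece carrying $(a+4)|(u-v)_i|$, we must not split $u_i$, since that would leave a term $|\lambda_i||(u-v)_i|^2$ that cannot be absorbed. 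Instead we use $|u_i|\le|\nabla u|_\omega$ and obtain $2(a+4)B_i|\lambda_i||\nabla u|_\omega|(u-v)_i|\le\frac1{32}B_i\lambda_i^2+C_a|\nabla u|_\omega^2B_i|(u-v)_i|^2$.
\end{itemize}
Summing over $i$, the $\lambda_i^2$ contributions total at most $\frac18\sum_iB_i\lambda_i^2$, and the remaining terms give exactly the right side of \eqref{eq:mixedlambda}, using $\sum_iB_i=\mathcal T$.

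For \eqref{eq:mixedchi}, we use $|\chi_{i\bar k}|\le C$ and $\sum_k|u_k|\le C|\nabla u|_\omega$. This bounds the term by $C\sum_iB_i|\nabla u|_\omega|\xi_i|$. The $aK_v$ part of $\xi_i$ contributes $C_aK_v|\nabla u|_\omega\mathcal T\le C_aK_v(1+|\nabla u|_\omega^2)^{1/2}\mathcal T$. The $(u-v)$ part contributes
\[
C(a+4)\sum_iB_i|\nabla u|_\omega|(u-v)_i|\le\tfrac12(1+|\nabla u|_\omega^2)|\partial(u-v)|_B^2+C_a\mathcal T
\]
by Cauchy--Schwarz with weight $\tfrac12$.

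The only delicate point is the one in the first estimate: the product $\lambda_iu_i(u-v)_i$ must be paired as $|\lambda_i|\cdot(|\nabla u|_\omega|(u-v)_i|)$, not split via $u=v+(u-v)$. Doing it that way is what produces the coefficient $|\nabla u|_\omega^2$ in front of $|\partial(u-v)|_B^2$. That coefficient is allowed, because the weight term $(1+|\nabla u|_\omega^2)L\xi$ contains $2\beta(1+|\nabla u|_\omega^2)|\partial(u-v)|_B^2$ with $\beta$ free to be taken large afterwards. Everything else is routine.
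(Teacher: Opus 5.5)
Your proof is correct and is essentially the paper's argument. You use the same rewriting $\xi_i=a(\ub-v)_i+(-a+2\beta w)(u-v)_i$ with $|{-a}+2\beta w|\le a+4$, split $u_i=v_i+(u-v)_i$ only in the $K_v$ piece, and use $|u_i|\le|\nabla u|_\omega$ in the other piece, applying Cauchy--Schwarz termwise in $i$ where the paper applies it after summing. One aside is overstated: splitting $u_i$ in the second piece would not be fatal, since the resulting $|\lambda_i||(u-v)_i|^2$ term can also be absorbed using $|(u-v)_i|\le|\nabla u|_\omega+K_v$, but this does not affect your proof.
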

\begin{proof}
We write the derivative of the weight in the form
\begin{equation}\label{eq:weightderivative}
\xi_i=a(\underline u_i-v_i)+(-a+2\beta w)(u_i-v_i),\qquad
|-a+2\beta w|\le a+4.
\end{equation}
The contribution of the second term in \eqref{eq:weightderivative} to the left-hand side of \eqref{eq:mixedlambda} is at most $C_a|\nabla u|_\omega\left(\sum_iB_i\lambda_i^2\right)^{1/2}|\partial(u-v)|_B$. For the first term, we use
\[
\begin{gathered}
\left|\sum_iB_i\lambda_iu_i(\underline u_{\bar i}-v_{\bar i})\right|
\le K_v\left(\sum_iB_i\lambda_i^2\right)^{1/2}
\left(\sum_iB_i|u_i|^2\right)^{1/2},\\
\sum_iB_i|u_i|^2\le2|\partial(u-v)|_B^2+2K_v^2\mathcal T.
\end{gathered}
\]
Applying the Cauchy-Schwarz inequality to each of the three resulting products, in such a way that each contributes at most $\frac1{24}\sum_iB_i\lambda_i^2$, we obtain \eqref{eq:mixedlambda}. To prove \eqref{eq:mixedchi}, we use
\[
\left(\sum_iB_i|\xi_i|^2\right)^{1/2}
\le aK_v\mathcal T^{1/2}+(a+4)|\partial(u-v)|_B,
\]
and
\[
\left(\sum_iB_i\left|\sum_k\chi_{i\bar k}u_k\right|^2\right)^{1/2}
\le C\mathcal T^{1/2}|\nabla u|_\omega.
\]
The product of these two quantities is bounded by $C_aK_v|\nabla u|_\omega\mathcal T+C_a|\nabla u|_\omega\mathcal T^{1/2}|\partial(u-v)|_B$, and by the Cauchy-Schwarz inequality, the second summand is bounded by $\frac12\bigl(1+|\nabla u|_\omega^2\bigr)|\partial(u-v)|_B^2+C_a\mathcal T$, as claimed.
\end{proof}

These inequalities account for the choice of the term $\beta w^2$ in the weight. Its contribution to $L\xi$ contains the large coefficient $2\beta$, while the coefficient $-a+2\beta w$ in \eqref{eq:weightderivative} remains bounded by $a+4$, since $u-v$ is small in $C^0$. Moreover, admissibility gives $L(u-v)\le h$, so that the calculation does not require any bound on the Hessian of $v$.

\subsection{Proof of the comparison estimate}
\begin{proof}[Proof of Proposition~\ref{prop:comparison}]
Let $\tau_0,\kappa_0$ be the constants of Lemma~\ref{lem:alternative}. We first choose $a\ge1$ large enough so that
\begin{equation}\label{eq:achoice}
(a\kappa_0-C_g-2)\tau_0>4H_0+2H_1+1.
\end{equation}
Next, with $C_a$ as in Lemma~\ref{lem:mixed}, we choose $\beta\ge2C_a+2$ and set $\eta=\beta^{-1}$. These choices depend only on the prescribed data. Let now $v$ be any smooth admissible function satisfying $\|u-v\|_{L^\infty}\le\eta$, and let $\xi$ be the weight defined in \eqref{eq:weight}.

Since $w\le0$, Lemma~\ref{lem:alternative} gives
\begin{equation}\label{eq:Lweight}
\begin{aligned}
L\xi&=aL(\ub-u)+2\beta wL(u-v)+2\beta |\partial(u-v)|_B^2\\
&\ge aL(\ub-u)-4H_0+2\beta |\partial(u-v)|_B^2.
\end{aligned}
\end{equation}
Combining this inequality with Lemma~\ref{lem:mixed} and \eqref{eq:weighted}, we find that, whenever
\begin{equation}\label{eq:Wthreshold}
\bigl(1+|\nabla u|_\omega^2\bigr)\ge C'_a(1+K_v^4),
\end{equation}
with $C'_a$ sufficiently large depending on the prescribed data, we have
\begin{equation}\label{eq:mastergradient}
\begin{aligned}
&e^{-\xi}L\bigl(e^\xi(1+|\nabla u|_\omega^2)\bigr)\\
&\quad\ge\frac18\sum_iB_i\lambda_i^2
+\bigl(1+|\nabla u|_\omega^2\bigr)
\Bigl[aL(\ub-u)-(C_g+2)\mathcal T\\
&\hspace{43mm}+\beta|\partial(u-v)|_B^2-(4H_0+2H_1)\Bigr].
\end{aligned}
\end{equation}
Indeed, before absorption, the coefficient of $|\partial(u-v)|_B^2$ is
\[
(2\beta-C_a-\tfrac12)\bigl(1+|\nabla u|_\omega^2\bigr)-C_aK_v^2,
\]
which is at least $\beta \bigl(1+|\nabla u|_\omega^2\bigr)$ under \eqref{eq:Wthreshold}. By increasing $C'_a$ if necessary, we also have $C_aK_v^4\le \bigl(1+|\nabla u|_\omega^2\bigr)$ and $C_a+C_aK_v\bigl(1+|\nabla u|_\omega^2\bigr)^{1/2}\le \bigl(1+|\nabla u|_\omega^2\bigr)$; these inequalities account for the additional term $2\mathcal T$ in \eqref{eq:mastergradient}. Finally, we have used $|\nabla u|_\omega\le \bigl(1+|\nabla u|_\omega^2\bigr)$.

Let $x_0$ be a maximum point of $e^\xi \bigl(1+|\nabla u|_\omega^2\bigr)$, and suppose that \eqref{eq:Wthreshold} holds there. If the first alternative in \eqref{eq:alternative} holds, then by \eqref{eq:achoice} the bracket in \eqref{eq:mastergradient} is strictly positive, which contradicts $L(e^\xi \bigl(1+|\nabla u|_\omega^2\bigr))(x_0)\le0$. If the second alternative holds, then
\[
|\partial(u-v)|_B^2\ge\kappa_0\mathcal T|\nabla(u-v)|_\omega^2.
\]
When $\bigl(1+|\nabla u|_\omega^2\bigr)\ge4(1+K_v^2)$, the triangle inequality gives
\[
|\nabla(u-v)|_\omega^2\ge c_*\bigl(1+|\nabla u|_\omega^2\bigr),\qquad c_*=(\sqrt3-1)^2/4.
\]
Using \eqref{eq:weakbarrier}, we conclude that the bracket in \eqref{eq:mastergradient} is at least
\[
[\beta\kappa_0c_*\bigl(1+|\nabla u|_\omega^2\bigr)-C_g-2]\mathcal T-(a+4)H_0-2H_1.
\]
Since $\mathcal T\ge\tau_0$, this quantity is positive for $|\nabla u|_\omega$ sufficiently large, which is again a contradiction; this threshold can be incorporated by increasing the fixed constant in \eqref{eq:Wthreshold}. Hence \eqref{eq:Wthreshold} cannot hold at the maximum point, and we have $$1+|\nabla u(x_0)|_\omega^2\le C(1+K_v^4).$$

The weight function $\xi$ satisfies
\[
\osc_X\xi\le a(C_{\mathrm{osc}}+\osc_X\ub)+4/\beta.
\]
Comparing the values of the test function at its maximum point and at an arbitrary point, we obtain $1+\|\nabla u\|_{L^\infty}^2\le C(1+K_v^4)$, and consequently
\[
\|\nabla u\|_{L^\infty}\le C\bigl(1+\|\nabla v\|_{L^\infty}^2\bigr).
\]
\end{proof}

\subsection{Proof of the gradient theorem}
We now choose the comparison function, using the compactness established in Section~\ref{sec:continuity}.

\begin{proof}[Proof of Theorem~\ref{thm:gradient}]
Suppose that the uniform gradient estimate does not hold. Then there exists a sequence of smooth admissible solutions, with the same prescribed data, such that $\|\nabla u_j\|_{L^\infty}\to\infty$. We normalize the solutions by $\inf_Xu_j=0$, and use Lemma~\ref{lem:C0compact} to pass to a subsequence which is uniformly Cauchy.

Let $\eta>0$ and $C$ be the constants of Proposition~\ref{prop:comparison}. We fix an index $N$ large enough so that
\[
\|u_j-u_N\|_{L^\infty}\le\eta\qquad(j\ge N),
\]
and set $v=u_N$. Being one of the original solutions, this function is smooth and admissible, and the proposition gives
\[
\|\nabla u_j\|_{L^\infty}\le C\bigl(1+\|\nabla u_N\|_{L^\infty}^2\bigr)
\qquad(j\ge N).
\]
The right-hand side is finite and independent of $j$, which contradicts the choice of the sequence.
\end{proof}

\begin{remark}\label{rem:quantifiers}
We stress that the radius $\eta$ is fixed before $u_N$ is chosen. Although there is no a priori control on $\|\nabla u_N\|_{L^\infty}$, it is a finite number which remains fixed as $j\to\infty$, and this suffices for the contradiction. The argument gives a bound which depends only on the prescribed data, but it does not provide an explicit expression for this bound.
\end{remark}

\section{Sup-slope normalization}\label{sec:normalization}
Recall from \cite{GS} that the sup-slope of a smooth real function $G$ is defined by
\[
\sigma_G=\inf_{w\in\E}\max_X e^{-G}F(A_w).
\]
If the equation $F(A_u)=e^{G+c}$ admits a smooth admissible solution, then $\sigma_G=e^c$. Indeed, taking $w=u$ gives $\sigma_G\le e^c$. Conversely, for any $w\in\E$, we have $A_u\le A_w$ at a maximum point of $u-w$, and hence $\max_Xe^{-G}F(A_w)\ge e^c$.

Comparing the two weights in the definition, we obtain
\[
e^{-\|G_1-G_2\|_{L^\infty}}\sigma_{G_2}
\le\sigma_{G_1}
\le e^{\|G_1-G_2\|_{L^\infty}}\sigma_{G_2}.
\]
Thus, whenever both normalized equations are solvable,
\begin{equation}\label{eq:slopecomparison}
|c_1-c_2|\le\|G_1-G_2\|_{L^\infty}.
\end{equation}
If, in addition, the densities are bounded above by $H_0$, the mean value theorem applied to the exponential function gives
\[
\|e^{G_1+c_1}-e^{G_2+c_2}\|_{L^\infty}\le2H_0\|G_1-G_2\|_{L^\infty}.
\]
Consequently, Proposition~\ref{prop:stability} also yields
\begin{equation}\label{eq:Gstability}
\|u_1-u_2\|_{L^\infty}
\le C\bigl(\|G_1-G_2\|_{L^\infty}+\|u_1-u_2\|_{L^1}\bigr)
\end{equation}
under its common subsolution hypotheses. The strict sup-slope condition
\[
e^{-G}f_\infty(\lambda(A_{\ub}))>\sigma_G
\]
is exactly a strict comparison with the level $h=e^{G+c}$ of the solution; for families of equations, we require the quantitative common gap \eqref{eq:strict}.

The class of operators considered here includes the roots of the Hessian operators $f=\sigma_k^{1/k}$ on $\Gamma_k$, and the Hessian quotients $f=(\sigma_k/\sigma_\ell)^{1/(k-\ell)}$, $1\le\ell<k\le n$. Our results apply to these operators whenever a common admissible strict subsolution as above is available, and in each case the zeroth order estimate follows from Lemma~\ref{lem:zeroth}. 


\section{Hessian estimates on Hermitian manifolds}\label{sec:hermitian}
With the gradient estimate in hand, we can apply the second order estimate of Sz\'ekelyhidi. We recall this estimate in the form needed here, and then verify that its constants can be chosen uniformly under our assumptions.

\begin{proposition}[Sz\'ekelyhidi]\label{prop:secondorder}
Assume the structural and common strict sup-slope hypotheses of the introduction, and let $u$ be a solution of $F(A_u)=h$ on a compact Hermitian manifold. Suppose in addition that $h_-\le h\le H_0$ and $\|h\|_{C^2}\le H_2$, where $h_-,H_2>0$. Then
\begin{equation}\label{eq:secondorder}
\sup_X|\ddbar u|_\omega
\le C\bigl(1+\|\nabla u\|_{L^\infty}^2\bigr).
\end{equation}
The constant depends only on the fixed data, $h_-$, and $H_2$, and it is independent of the norm of the gradient on the right-hand side. Here the quantitative subsolution condition \eqref{eq:quantC} and the Guo-Phong estimate \eqref{eq:zeroth} provide the subsolution and zeroth order bounds required in Sz\'ekelyhidi's estimate. Neither the metric nor the background form is required to be closed.
\end{proposition}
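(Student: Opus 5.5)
The plan is to invoke Sz\'ekelyhidi's second order estimate \cite[Proposition 13]{Szekelyhidi} directly, after checking that each of its hypotheses holds with constants depending only on the stated data. That result applies to $F(A_u)=h$ on a compact Hermitian manifold, with $f$ satisfying the structural conditions \eqref{eq:structure}. It requires three inputs. First, a $\mathcal C$-subsolution. Second, a zeroth order bound. Third, bounds on $h$: positive lower and upper bounds together with a $C^2$ bound. It then gives
\[
\sup_X|\ddbar u|_\omega\le C\bigl(1+\sup_X|\nabla u|_\omega^2\bigr).
\]
We will check these inputs in order and track what the constant depends on.

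For the subsolution, Lemma~\ref{lem:quantC} already gives $d>0$ and $R_{\mathrm{sub}}$, depending only on $f,\Gamma,A_{\ub},H_0,\delta_0$. With these, the sets $(\mu(x)-2d\one+\Gamma_n)\cap\{f=h(x)\}$ lie in $B_{R_{\mathrm{sub}}}(0)$ uniformly in $x$ and in the right-hand side. This is exactly the uniform form of the $\mathcal C$-subsolution condition used in \cite[Proposition 6]{Szekelyhidi}. That proposition yields the alternative, either $L(\ub-u)\ge\kappa\mathcal T$ or $B\ge\kappa\mathcal T\Id$, once $|\lambda(A_u)|$ is large. The only ingredients are the level sets on $[h_-,H_0]$ and the geometry of $\mu(X)$, and these are controlled by the fixed data.

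For the zeroth order bound, Lemma~\ref{lem:zeroth} gives $0\le u\le C_{\mathrm{osc}}$. For the data on $h$, the hypotheses give $h_-\le h\le H_0$ and $\|h\|_{C^2}\le H_2$. The lower bound $h_-$ keeps the eigenvalues of any bounded region away from $\partial\Gamma$, and it is used in Sz\'ekelyhidi's treatment of the $C^2$ term through the concavity of $F$.

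Where Sz\'ekelyhidi normalizes by taking $\chi$ to be the background form, we instead work with $\chi$ an arbitrary smooth real $(1,1)$-form and $\omega$ a general Hermitian metric, without any closedness assumption. His computation is carried out with the Chern connection, and it absorbs torsion and $\nabla\chi$ terms into constants depending on $\omega$ and $\chi$. So the statement holds in this generality, with the constant independent of $\|\nabla u\|$.

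The main point to verify is the dependence of the constants. In particular, the threshold in Sz\'ekelyhidi's alternative and the constants in his maximum-principle argument must depend only on the quantities above, and not on the particular solution or on the modulus of continuity of $h$. Both uniformities are provided by Lemmas~\ref{lem:pinching} and \ref{lem:quantC}, via Remark~\ref{rem:envelope}.
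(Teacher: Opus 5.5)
Your proposal follows the paper's route: cite Sz\'ekelyhidi's Proposition 13 and check its inputs. Lemma~\ref{lem:quantC} supplies the uniform constants $d,R_{\mathrm{sub}}$ for the alternative of his Proposition 6, Lemma~\ref{lem:zeroth} supplies the zeroth order bound, and $\sup_{\partial\Gamma}f=0<h_-$ keeps the levels in a compact range. The one bookkeeping step the paper makes explicit, and you leave implicit, is the shift $w=u-\ub$, $\widehat\chi=\chi+\ddbar\ub$, which makes $0$ the subsolution, followed by a return to $u$ via $\|\nabla w\|^2\le2\|\nabla u\|^2+2\|\nabla\ub\|^2$; this is how the constant comes to depend on $\nabla\ub$ and $\ddbar\ub$ but on no derivative of $u$.
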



\subsection{The subsolution condition}
The quantitative $\mathcal C$-subsolution condition has already been verified in Lemma~\ref{lem:quantC}, and the zeroth order bound required in Sz\'ekelyhidi's estimate is provided by Lemma~\ref{lem:zeroth}.

The remaining hypotheses of Proposition~\ref{prop:secondorder} follow from the assumptions made in the introduction. The cone is open, convex, symmetric, and contains $\Gamma_n$, while $f$ is smooth, symmetric, elliptic, and concave. The radial growth assumption in \eqref{eq:structure} gives growth beyond every finite level. Moreover,
\begin{equation}\label{eq:boundaryseparation}
\sup_{\partial\Gamma}f=0<h_-\le\inf_Xh,
\end{equation}
so that the interval $[h_-,H_0]$ lies in a compact subset of the range $(0,\infty)$ of admissible values. Together with Lemma~\ref{lem:quantC}, this gives a uniform constant in the subsolution alternative of \cite[Proposition 6]{Szekelyhidi}. We note that these structural assumptions are imposed on $f$ and $\Gamma$, while the strict sup-slope condition is used only to verify the subsolution condition.

For later use, we note that the lower bound for the trace of the linearization can also be seen directly. Choose $a_*>0$ such that $f(a_*\one)>H_0+1$. As in Lemma~\ref{lem:alternative}, positivity along admissible rays gives $Df(\lambda)\cdot\lambda\ge0$, and concavity then implies, whenever $f(\lambda)\le H_0$,
\[
f(a_*\one)\le f(\lambda)+a_*\sum_i f_i(\lambda)
-\sum_i f_i(\lambda)\lambda_i
\le H_0+a_*\sum_i f_i(\lambda).
\]
Thus $\sum_i f_i(\lambda)\ge a_*^{-1}$, without any bound on the eigenvalues of $A_u$.

\subsection{Dependence of the second order constant}
In order to keep track of the dependence on the subsolution, we write
\begin{equation}\label{eq:shiftedbackground}
w=u-\ub,\qquad \widehat\chi=\chi+\ddbar\ub,
\qquad \widehat\chi+\ddbar w=\chi+\ddbar u.
\end{equation}
Then the zero function is a quantitative $\mathcal C$-subsolution for the equation satisfied by $w$. After subtracting a constant from $w$, we may assume that $\inf_Xw=0$, and
\[
\osc_Xw\le C_{\mathrm{osc}}+\osc_X\ub.
\]
Proposition~\ref{prop:secondorder} then gives
\begin{equation}\label{eq:Cstar}
\begin{gathered}
\|\ddbar w\|_{L^\infty}
\le C_*\bigl(1+\|\nabla w\|_{L^\infty}^2\bigr),\\
C_*=C_*\bigl(X,\omega,f,\Gamma,\|\widehat\chi\|_{C^2},
h_-,H_0,H_2,d,R_{\mathrm{sub}}\bigr).
\end{gathered}
\end{equation}
Here the fixed geometric data include the bounds in coordinates for the metric and its inverse. Lemma~\ref{lem:zeroth}, applied to $w$, bounds its oscillation in terms of $d,R_{\mathrm{sub}}$ and the fixed shifted background form, while Lemma~\ref{lem:quantC} determines $d,R_{\mathrm{sub}}$ in terms of this background form and $f,\Gamma,H_0,\delta_0$. Thus none of these quantities requires an additional assumption on $u$. In the proof of Sz\'ekelyhidi's estimate, the quantity $1+\|\nabla w\|_{L^\infty}^2$ enters as a parameter in the gradient term of the weight, and the maximum principle bounds the ratio of the largest eigenvalue to this parameter. The remaining constants are chosen in terms of the data displayed in \eqref{eq:Cstar}.

Returning to $u$ and using the triangle inequality, we obtain
\begin{equation}\label{eq:unshiftedconstant}
\begin{aligned}
\|\ddbar u\|_{L^\infty}
&\le C_*\bigl(1+2\|\nabla u\|_{L^\infty}^2
+2\|\nabla\ub\|_{L^\infty}^2\bigr)
+\|\ddbar\ub\|_{L^\infty}\\
&\le C_{\mathrm{so}}\bigl(1+\|\nabla u\|_{L^\infty}^2\bigr),
\end{aligned}
\end{equation}
where we may take
\[
C_{\mathrm{so}}=
2C_*\bigl(1+\|\nabla\ub\|_{L^\infty}^2\bigr)
+\|\ddbar\ub\|_{L^\infty}.
\]
Thus the second order constant involves derivatives of the fixed subsolution, but no norm of the derivatives of the unknown solution.

\subsection{Proof of Corollary~\ref{cor:hermitian}}
We note first that the argument is not circular. The zeroth order estimate of Guo-Phong precedes the compactness and gradient arguments, and the gradient estimate was proved using only the algebraic subsolution alternative, the stability estimate, and a maximum principle argument. The integral estimate in Lemma~\ref{lem:L1compact} uses only the Laplacian of the fixed background metric, and in the comparison argument of Section~\ref{sec:gradient}, the terms involving the Hessian are estimated as they arise, without assuming any bound on them. Consequently, Proposition~\ref{prop:secondorder} can now be applied.

\begin{proof}[Proof of Corollary~\ref{cor:hermitian}]
The bound $\|h\|_{C^2}\le H_2$ implies the first derivative bound required in Theorem~\ref{thm:gradient}; with respect to the fixed norms, we may take $H_1=C_\omega H_2$. Hence
\[
\|\nabla u\|_{L^\infty}\le C_{\mathrm{grad}},
\]
where $C_{\mathrm{grad}}$ depends only on $(X,\omega),\chi,f,\Gamma,\ub,H_0,H_2,\delta_0$. By Lemma~\ref{lem:quantC} and the preceding verification, we can apply \eqref{eq:unshiftedconstant}, which yields
\begin{equation}\label{eq:M2}
\|\ddbar u\|_{L^\infty}
\le C_{\mathrm{so}}(1+C_{\mathrm{grad}}^2).
\end{equation}
We note that the subsolution constants $d,R_{\mathrm{sub}}$ and the constant $C_{\mathrm{so}}$ have been chosen without using either $C_{\mathrm{grad}}$ or a bound on the Hessian of $u$.

It remains to pass from the bound on the complex Hessian to the full $C^{2,\alpha}$ estimate. By \eqref{eq:M2}, and since the background form is fixed, we have $|\lambda(A_u)|\le R_1$ for a fixed constant $R_1$. The set
\begin{equation}\label{eq:spectralslab}
\mathcal K=\{\lambda\in\overline\Gamma:
|\lambda|\le R_1,\ h_-\le f(\lambda)\le H_0\}
\end{equation}
is then compact and contained in $\Gamma$, since $f=0$ on $\partial\Gamma$ and $h_->0$. It follows that
\[
0<\vartheta\le f_i(\lambda)\le\Theta<\infty
\qquad(\lambda\in\mathcal K,\ 1\le i\le n)
\]
for constants determined by the data.

We can therefore apply the local complex Evans-Krylov estimate, in the formulation of \cite[Theorem 1.2]{TWWY}, on a fixed finite cover by coordinate charts. Since the metric and $\chi$ are fixed and $h$ has the required regularity, we obtain a uniform $C^{\alpha_0}$ bound for the complex Hessian, for some $\alpha_0\in(0,1)$.

In particular, $\Delta_\omega u$ is bounded in $C^{\alpha_0}$, and the Schauder estimates \cite{GT} for the fixed metric $\omega$ give
\[
\|u\|_{C^{2,\alpha_0}(X)}
\le C\bigl(\|\Delta_\omega u\|_{C^{\alpha_0}(X)}
+\|u\|_{C^0(X)}\bigr)\le C',
\]
where the last term is controlled by Lemma~\ref{lem:zeroth}. This completes the proof of the corollary.
\end{proof}

\begin{remark}\label{rem:quantitativescope}
If one were to allow a varying family of subsolutions, the second order estimate would require uniform smooth bounds on the subsolutions and common constants in \eqref{eq:quantC}; a strict inequality for each solution, without quantitative control, would not suffice. In our setting, these bounds are provided by the fixed smooth admissible function $\ub$ and the common gap. We note also that the gradient estimate itself relies on the stronger hypothesis that the subsolution is admissible. Finally, we do not claim that the second order constants remain uniform as $h_-\to0$.
\end{remark}

\section{A direct Hessian estimate on K\"ahler manifolds}\label{sec:directhessian}
In this section, we give the proof of Theorem~\ref{thm:kahler} based on Green's functions. Throughout the section, $\omega$ is K\"ahler and $\chi$ is closed. The argument uses the uniform compactness established in Section~\ref{sec:continuity}, together with additional assumptions on $f$, in order to control the derivative term in the differential inequality for the largest eigenvalue.

\subsection{Assumptions on the operator}
We arrange the eigenvalues in decreasing order, $\lambda_1\ge\cdots\ge\lambda_n$, and recall that $\mathcal T(\lambda)=\sum_i f_i(\lambda)$. Since $\Gamma\subset\Gamma_2$, we have
\begin{equation}\label{eq:tracecone}
\sum_i\lambda_i^2<\sigma_1(\lambda)^2,
\qquad |\lambda_i|<\sigma_1(\lambda),
\qquad 0<\lambda_1\le\sigma_1(\lambda)\le n\lambda_1.
\end{equation}
Indeed, $\sigma_1(\lambda)^2-\sum_i\lambda_i^2=2\sigma_2(\lambda)>0$. Note also that $\sigma_1(\lambda(A_u))=\tr_\omega\chi_u$ at each point of $X$.

The first additional assumption is that the trace of the linearization grows when the trace of $A_u$ becomes large. With the convention that the supremum over the empty set is zero, we assume that
\begin{equation}\label{eq:growth}
\varepsilon(R):=
\sup_{\substack{\lambda\in\Gamma,\ h_-\le f(\lambda)\le H_0\\
\sigma_1(\lambda)\ge R}}
\frac1{\mathcal T(\lambda)}\longrightarrow0
\qquad(R\to\infty).
\end{equation}
This is an assumption on the operator, independent of the strict sup-slope condition.

We also assume a normalized ellipticity condition in the region where the smallest eigenvalue is sufficiently negative. More precisely, for every $\epsilon>0$, there exist $\vartheta_\epsilon>0$ and $R_\epsilon<\infty$ such that
\begin{equation}\label{eq:badcone}
\begin{gathered}
h_-\le f(\lambda)\le H_0,\qquad
\sigma_1(\lambda)>R_\epsilon,\qquad
\lambda_n\le-\epsilon\sigma_1(\lambda)\\
\Longrightarrow\quad
\frac{f_i(\lambda)}{\mathcal T(\lambda)}\ge\vartheta_\epsilon
\quad(1\le i\le n).
\end{gathered}
\end{equation}
This condition is vacuous if $\Gamma$ is contained in the positive orthant. Otherwise, it provides uniform ellipticity on the set to which the remaining gradient term will be localized.

Finally, we require a refined concavity inequality, which was proved for complex Hessian operators by Dong-Zhang \cite[Lemma 3.2]{DZ}, and which we impose here as an additional hypothesis on a general operator $f$, near the region where the eigenvalues are nonnegative. More precisely, we assume that there exists $C_*>0$ such that, for every $\beta\in(0,1)$, there exist $\delta_\beta\in(0,\beta]$ and $R_\beta<\infty$ with the following property: if
\[
\lambda\in\Gamma,\qquad h_-\le f(\lambda)\le H_0,
\qquad \lambda_1\ge R_\beta,
\qquad \lambda_n\ge-\delta_\beta\lambda_1,
\]
then, for every $\eta\in\C^n$,
\begin{equation}\label{eq:refined}
\begin{aligned}
-\sum_{p,q}f_{pq}\eta_p\overline{\eta_q}
+\sum_{i>1}\frac{f_i|\eta_i|^2}{(1+\delta_\beta)\lambda_1}
\ge{}&(1-\beta)\frac{f_1|\eta_1|^2}{\lambda_1}\\
&-C_*\frac{|\sum_i f_i\eta_i|^2}{f(\lambda)}.
\end{aligned}
\end{equation}
We emphasize that the constant $C_*$ is independent of $\beta$. If the condition holds initially with some unrestricted $\delta_\beta>0$, then we can decrease $\delta_\beta$ so that $\delta_\beta\le\beta$, since this restricts the region and increases the favorable term on the left-hand side. Note that concavity by itself only implies that the first term is nonnegative, and does not imply \eqref{eq:refined}.

For $f=\sigma_k^{1/k}$ on $\Gamma_k$, $2\le k\le n$, inequality~\eqref{eq:refined} follows from \cite[Lemma 3.2, equation (3.2)]{DZ} by the chain rule, with $C_*=k+1$. Indeed, after multiplying their inequality by $\sigma_k$ and differentiating $f=\sigma_k^{1/k}$, the coefficient $2$ of the squared first derivative term in their inequality becomes $k+1$ in our normalization. Thus $C_*$ is independent of $\beta$, and their thresholds can be chosen uniformly on the fixed interval of levels. A version for sums of Hessian operators is also established by Dong-Zhang in \cite[Lemma 3.1]{DZ}.

\subsection{A barrier where the trace is large}
We normalize the linearized operator by its trace:
\[
\Ln=\mathcal T^{-1}L,\qquad
|\partial q|_{\Ln}^2=\mathcal T^{-1}\sum_iB_i|q_i|^2.
\]
For each smooth solution, this is an elliptic operator with smooth positive coefficients, whose trace is equal to one. Outside the region in \eqref{eq:badcone}, no lower bound on the ellipticity which is uniform over the family is assumed.

Choose $s_*>0$ such that $A_{\ub}-s_*\Id$ is admissible everywhere. By \eqref{eq:strongbarrier}, we have
\[
\Ln(u-\ub)\le h/\mathcal T-s_*.
\]
By \eqref{eq:growth}, the first term on the right-hand side is small when $\tr_\omega\chi_u$ is large. Thus, if we set
\begin{equation}\label{eq:highbarrierdef}
w_u=u-\ub-\sup_X(u-\ub),\qquad
C_{\mathrm{shift}}=C_{\mathrm{osc}}+\osc_X\ub,\qquad \kappa=s_*/2,
\end{equation}
there exists a uniform constant $R_0$ such that
\begin{equation}\label{eq:highbarrier}
-C_{\mathrm{shift}}\le w_u\le0,
\qquad \Ln w_u\le-\kappa
\quad\hbox{on }\{\tr_\omega\chi_u>R_0\}.
\end{equation}
Note that only the admissibility of $\ub$ is used here; in particular, $A_{\ub}$ need not be positive definite. The inequalities $\operatorname{tr}(BA_u)\le h$ and $\operatorname{tr}(BA_{\ub})\ge s_*\mathcal T$ also show that homogeneity of the operator is not needed.

The other ingredient which we shall need is the uniform $C^0$ compactness of Lemma~\ref{lem:C0compact}, whose hypothesis on the right-hand sides follows from the bound on $\log h$ in Theorem~\ref{thm:kahler}. We recall that this compactness was established before, and independently of, the gradient estimate.

\subsection{The largest eigenvalue}
\begin{lemma}\label{lem:eigenvalue}
There exists a constant $C$, independent of $\beta$, such that, wherever $\lambda_1$ is sufficiently large,
\begin{equation}\label{eq:globaleigen}
\Ln\log\lambda_1\ge-|\partial\log\lambda_1|_{\Ln}^2-C-C/\lambda_1.
\end{equation}
Moreover, for every $\beta\in(0,1)$, we have on $\{\lambda_1>R'_\beta,\lambda_n\ge-\delta_\beta\lambda_1\}$ the sharper inequality
\begin{equation}\label{eq:refinedeigen}
\Ln\log\lambda_1\ge-\beta|\partial\log\lambda_1|_{\Ln}^2-C-C/\lambda_1.
\end{equation}
At points where $\lambda_1$ is not smooth, these inequalities hold in the sense of smooth test functions touching from above.
\end{lemma}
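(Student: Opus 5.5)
The plan is the standard computation for the largest eigenvalue on a Kähler manifold, in normal coordinates at a point where $A_u$ is diagonal. Where $\lambda_1$ has multiplicity, I would first perturb in the usual way. One option is to replace $A_u$ by $A_u-\epsilon D$, where $D$ is a fixed constant diagonal matrix in normal coordinates with $D_{11}=0<D_{22}<\dots$. The other is to work with a smooth function touching $\lambda_1$ from above. Either way $\lambda_1$ becomes smooth near the point, with the same value there, and the inequalities then hold in the viscosity sense stated. The first and second variation formulas give
\[
\nabla_p\lambda_1=\nabla_p(\chi_u)_{1\bar1},\qquad
L\lambda_1\ge\sum_iB_i\nabla_i\nabla_{\bar i}(\chi_u)_{1\bar1}
+\sum_iB_i\sum_{q>1}\frac{|\nabla_i(\chi_u)_{1\bar q}|^2+|\nabla_{\bar i}(\chi_u)_{1\bar q}|^2}{\lambda_1-\lambda_q}.
\]
Next I would commute derivatives, using that $\omega$ is Kähler and $\chi$ is closed. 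This turns $\nabla_i\nabla_{\bar i}(\chi_u)_{1\bar1}$ into $\nabla_1\nabla_{\bar1}(\chi_u)_{i\bar i}$ plus curvature terms of size $C(\lambda_1+1)$ times $B_i$. I would then differentiate the equation twice in the $e_1$ direction, which gives
\[
\sum_iB_i\nabla_1\nabla_{\bar1}(\chi_u)_{i\bar i}
=h_{1\bar1}-\sum F^{ij,kl}\nabla_1(\chi_u)_{i\bar j}\nabla_{\bar1}(\chi_u)_{k\bar l}
\ge h_{1\bar1}-\sum_{p,q}f_{pq}\eta_p\overline{\eta_q}+\sum_{p\ne q}\frac{f_p-f_q}{\lambda_p-\lambda_q}|\nabla_1(\chi_u)_{p\bar q}|^2,
\]
with $\eta_p=\nabla_1(\chi_u)_{p\bar p}$. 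The last sum is nonnegative by concavity, so I can discard it. Since $h_{1\bar1}\ge -C h\ge -C\mathcal T\lambda_1$ using $\|\log h\|_{C^2}\le K_2$, dividing by $\mathcal T\lambda_1$ gives the terms $-C-C/\lambda_1$.

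The core is then to compare the gradient term $\Ln\log\lambda_1=\Ln\lambda_1/\lambda_1-|\partial\log\lambda_1|^2_{\Ln}$ with the good terms. The good terms are $-\sum f_{pq}\eta_p\overline{\eta_q}$ and the third-order sum $\sum_{q>1}B_i|\nabla_i(\chi_u)_{1\bar q}|^2/(\lambda_1-\lambda_q)$. For \eqref{eq:globaleigen}, concavity alone already suffices: both good terms are nonnegative, and we keep the full $-|\partial\log\lambda_1|^2_{\Ln}$.

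For \eqref{eq:refinedeigen}, I would use the Kähler symmetry $\nabla_i(\chi_u)_{1\bar q}=\nabla_1(\chi_u)_{i\bar q}$, so that for $q=i>1$ the third-order term contains $\sum_{i>1}B_i|\eta_i|^2/(\lambda_1-\lambda_i)$. On the region $\lambda_n\ge-\delta_\beta\lambda_1$ we have $\lambda_1-\lambda_i\le(1+\delta_\beta)\lambda_1$. This is exactly the second term in \eqref{eq:refined}. Applying \eqref{eq:refined} yields at least $(1-\beta)f_1|\eta_1|^2/\lambda_1-C_*|\sum f_i\eta_i|^2/f$.

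The differentiated equation gives $\sum_i f_i\eta_i=h_1$, so the last term is bounded by $C_*|\nabla h|^2/h\le Ch\le C\mathcal T\lambda_1$, again by the bound on $\log h$. Since $\eta_1=\nabla_1\lambda_1$, dividing by $\mathcal T\lambda_1$ gives $(1-\beta)B_1|\nabla_1\lambda_1|^2/(\mathcal T\lambda_1^2)$.

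The main obstacle is that this controls only the $i=1$ component of $|\partial\log\lambda_1|^2_{\Ln}=\sum_iB_i|\nabla_i\lambda_1|^2/(\mathcal T\lambda_1^2)$. For $i>1$, I would again use the third-order term, now through $\nabla_i(\chi_u)_{1\bar1}=\nabla_1(\chi_u)_{i\bar1}$ (Kähler). This gives $B_i|\nabla_i\lambda_1|^2/(\lambda_1-\lambda_i)$ from the $q=i$ entries of the $\nabla_{\bar i}$ sum, i.e.\ from the conjugate terms $|\nabla_{\bar i}(\chi_u)_{1\bar i}|^2$. Since $\lambda_1-\lambda_i\le(1+\delta_\beta)\lambda_1$, this bounds $B_i|\nabla_i\lambda_1|^2/((1+\delta_\beta)\lambda_1)$. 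I have to be careful not to double count terms already fed into \eqref{eq:refined}; there are two sums, $\nabla_i$ and $\nabla_{\bar i}$, and each can be used once.

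Collecting everything, the coefficient in front of each component is at least $\min(1-\beta,(1+\delta_\beta)^{-1})\ge 1-\beta$, using $\delta_\beta\le\beta$. This yields \eqref{eq:refinedeigen}, with $C$ independent of $\beta$ because $C_*$ is. The threshold $R'_\beta$ comes from $R_\beta$ together with the requirement that $\lambda_1$ dominate the curvature terms.
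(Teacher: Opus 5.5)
Your treatment of \eqref{eq:globaleigen} is fine: discarding all the good terms and bounding $h_{1\bar1}$ and the curvature terms is enough. Your handling of the $i=1$ component in \eqref{eq:refinedeigen} through \eqref{eq:refined} also matches the paper. The step for the components $i>1$ of $|\partial\log\lambda_1|_{\Ln}^2$, however, fails.

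The term you invoke satisfies $|\nabla_{\bar i}(\chi_u)_{1\bar i}|=|\nabla_i(\chi_u)_{i\bar1}|$. This is a third derivative whose two holomorphic indices are both $i$. By contrast, $\nabla_i\lambda_1=\nabla_i(\chi_u)_{1\bar1}=\nabla_1(\chi_u)_{i\bar1}$ has holomorphic indices $1$ and $i$. Closedness of $\chi_u$ does not identify the two.

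In fact, in the variation sum $\sum_jB_j\sum_{q>1}\bigl(|\nabla_j(\chi_u)_{1\bar q}|^2+|\nabla_{\bar j}(\chi_u)_{1\bar q}|^2\bigr)/(\lambda_1-\lambda_q)$, the quantity $z_i=\nabla_i\lambda_1$ with $i>1$ appears only in row $j=1$, $q=i$, via $|\nabla_{\bar1}(\chi_u)_{1\bar i}|^2=|z_i|^2$. Its coefficient there is $f_1$, which by concavity and symmetry is the smallest of the $f_j$. So the eigenvalue-variation terms alone cannot control $\sum_{i>1}f_i|z_i|^2/\lambda_1^2$, and your collection step does not go through.

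The missing ingredient is exactly the off-diagonal part of $-D^2F$ that you discarded. Incidentally, in your display that sum should carry a minus sign; it is $-\sum_{p\ne q}\frac{f_p-f_q}{\lambda_p-\lambda_q}|\nabla_1(\chi_u)_{p\bar q}|^2$ that is nonnegative.

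Its $(p,q)=(i,1)$ entries contribute $\frac{f_i-f_1}{\lambda_1-\lambda_i}|\nabla_1(\chi_u)_{i\bar1}|^2=\frac{f_i-f_1}{\lambda_1-\lambda_i}|z_i|^2\ge0$. Adding the row-one variation term $\frac{f_1|z_i|^2}{\lambda_1-\lambda_i}$ produces $\sum_{i>1}\frac{f_i|z_i|^2}{\lambda_1-\lambda_i}$, which is the last sum in the paper's lower bound for $L\lambda_1$.

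These terms are disjoint from the $\eta_i$ terms you feed into \eqref{eq:refined}, which come from row $i$, the $\nabla_i$ part, $q=i$. So there is no double counting. With this in place, your bound $\lambda_1-\lambda_i\le(1+\delta_\beta)\lambda_1$ gives the coefficient $(1+\delta_\beta)^{-1}\ge1-\beta$ for each $i>1$, and the rest of your argument goes through as in the paper.
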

\begin{proof}
Assume first that $\lambda_1$ is simple. We use K\"ahler normal coordinates at the given point, in which $A_u$ is diagonal, and set
\[
\eta_p=\nabla_1(\chi_u)_{p\bar p},\qquad z_i=\nabla_i(\chi_u)_{1\bar1}.
\]
Since $\chi_u$ is closed, we have $z_i=\nabla_1(\chi_u)_{i\bar1}$ and $z_1=\eta_1$. Differentiating the equation once, we obtain
\begin{equation}\label{eq:diffh}
\sum_p f_p\eta_p=h_1.
\end{equation}
Recall that the Hessian of a spectral matrix function is given by
\begin{equation}\label{eq:spectralhessian}
D^2F(A)[P,P]
=\sum_{p,q}f_{pq}P_{p\bar p}P_{q\bar q}
+\sum_{p\ne q}\frac{f_p-f_q}{\lambda_p-\lambda_q}|P_{p\bar q}|^2.
\end{equation}
Here the quotients are understood as their limiting values at repeated eigenvalues, and they are nonpositive by concavity and symmetry.

Differentiating the equation twice, commuting covariant derivatives, and using the formula for the second derivatives of the largest eigenvalue, we obtain, keeping only the terms needed below,
\begin{equation}\label{eq:Llargest}
\begin{aligned}
L\lambda_1\ge{}&h_{1\bar1}-C\lambda_1\mathcal T
-\sum_{p,q}f_{pq}\eta_p\overline{\eta_q}\\
&+\sum_{i>1}\frac{f_i|\eta_i|^2}{\lambda_1-\lambda_i}
+\sum_{i>1}\frac{f_i|z_i|^2}{\lambda_1-\lambda_i}.
\end{aligned}
\end{equation}
In the last sum, the coefficient $(f_i-f_1)/(\lambda_1-\lambda_i)$ coming from $-D^2F$ combines with the coefficient $f_1/(\lambda_1-\lambda_i)$ coming from the variation of the eigenvalue. The terms involving $\eta_i$ arise from the other eigenvalue variation terms, by the Codazzi identity. The terms arising from the commutation of derivatives are bounded by $C\lambda_1\mathcal T$, in view of \eqref{eq:tracecone}; no sign condition on the curvature is needed.

On the region where $\lambda_n\ge-\delta_\beta\lambda_1$, we have $\lambda_1-\lambda_i\le(1+\delta_\beta)\lambda_1$. We apply \eqref{eq:refined} to the diagonal quadratic form in \eqref{eq:Llargest}, and use \eqref{eq:diffh}. After dividing by $\lambda_1\mathcal T$ and subtracting $|\partial\log\lambda_1|_{\Ln}^2$, the loss in the term with $i=1$ is at most $\beta(f_1/\mathcal T)|z_1|^2/\lambda_1^2$. For $i>1$, the remaining coefficient is
\[
\frac{f_i}{\mathcal T}\left(\frac1{\lambda_1(\lambda_1-\lambda_i)}-\frac1{\lambda_1^2}\right)
=\frac{f_i}{\mathcal T}\frac{\lambda_i}{\lambda_1^2(\lambda_1-\lambda_i)}
\ge-\delta_\beta\frac{f_i/\mathcal T}{\lambda_1^2}.
\]
Since $\delta_\beta\le\beta$, this gives the coefficient $\beta$ in \eqref{eq:refinedeigen}. The remaining term is controlled by the assumed $C^2$ bound on $\log h$:
\[
h_{1\bar1}-C_*\frac{|h_1|^2}{h}
=h\bigl((\log h)_{1\bar1}+(1-C_*)|(\log h)_1|^2\bigr)\ge-C.
\]
Since $\mathcal T\ge\tau_0$ by Lemma~\ref{lem:alternative}, the corresponding error term, after normalization, is at most $C/\lambda_1$. This proves \eqref{eq:refinedeigen}. To obtain \eqref{eq:globaleigen}, it suffices to discard all the favorable quadratic terms in the twice differentiated equation, and to subtract the full term involving the derivative of the logarithm.

It remains to explain how these inequalities are to be understood when $\lambda_1$ is not differentiable. Suppose that a smooth function touches $\lambda_1$ from above at a point where the top eigenspace has dimension $m>1$. The first order contact conditions force each directional derivative of the top block of $A_u$ to be a scalar multiple of the identity, so that the off-diagonal derivatives within that block vanish. The Codazzi identity then gives $\eta_i=0$ for $1\le i\le m$ and $z_i=0$ for $2\le i\le m$. The second order inequality at the point of contact retains the eigenvalue variation terms with positive gaps $\lambda_1-\lambda_i$, $i>m$, while all the omitted terms with zero gaps have vanishing numerators. Applying the preceding estimates with these contact conditions, we obtain the same inequalities for upper test functions. Alternatively, one may first apply the usual perturbation argument which makes the largest eigenvalue simple, and then pass to the limit.
\end{proof}

\subsection{Dirichlet Green's functions}
Let $D$ be a relatively compact smooth domain contained in $\{\tr_\omega\chi_u>R_0\}$. We denote by $G_D(x,y)$ the nonnegative Dirichlet Green's function of $-\Ln$, with integration taken with respect to $dV_\omega$, so that
\[
p(x)=\int_DG_D(x,y)q(y)\,dV_\omega(y)
\]
is the solution of $-\Ln p=q$ in $D$ with zero boundary values. We shall not need the symmetry of $G_D$.

In particular, let $\tau_D$ be the solution of $-\Ln\tau_D=1$ in $D$ with $\tau_D=0$ on $\partial D$. The barrier \eqref{eq:highbarrier} and the maximum principle imply
\begin{equation}\label{eq:torsionbound}
0\le\tau_D(x)=\int_DG_D(x,y)\,dV_\omega(y)
\le\frac{w_u(x)+C_{\mathrm{shift}}}{\kappa}
\le\frac{C_{\mathrm{shift}}}{\kappa}.
\end{equation}
This bound will be combined with a localized version of the Alexandrov estimate.

\begin{lemma}\label{lem:greenlocalized}
Let $\Ln$ be a smooth elliptic operator of complex Hessian type, whose positive coefficient matrix has $\omega$-trace one. Assume that
\[
\sup_{x\in D}\int_DG_D(x,y)\,dV_\omega(y)\le T_D,
\]
and that the coefficient matrix is bounded below by $\vartheta\omega^{-1}$ on a measurable set $E\subset D$. Then
\begin{equation}\label{eq:greenlocalized}
\sup_{x\in D}\int_EG_D(x,y)\,dV_\omega(y)
\le C(X,\omega,n,\vartheta)(1+T_D)
\Vol_\omega(E)^{1/(2n)}.
\end{equation}
In particular, no uniform lower bound on the ellipticity is needed outside $E$.
\end{lemma}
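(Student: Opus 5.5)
Fix $x\in D$ and a nonnegative $q\in C^\infty_c(D)$, or a smooth approximation of $\one_E$, and let $p$ solve $-\Ln p=q\one_E$ in $D$ with $p=0$ on $\partial D$. Then $\int_EG_D(x,y)q(y)\,dV_\omega(y)=p(x)$, and it suffices to show
\[
\sup_D p\le C(1+T_D)\,\|q\|_{L^{2n}(E)}.
\]
Taking $q$ close to $\one_E$ then gives \eqref{eq:greenlocalized}, since $\|\one_E\|_{L^{2n}}=\Vol_\omega(E)^{1/(2n)}$. The idea is to apply the Alexandrov--Bakelman--Pucci estimate in real dimension $2n$. The right-hand side is supported in $E$, where the operator is $\vartheta$-elliptic. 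The weak ellipticity off $E$ only enters through the torsion bound $T_D$. Because the ABP estimate is local, I would work in a fixed finite cover of $X$ by coordinate balls of radius $r_0$ determined by $\omega$, so that $\omega$ is uniformly Euclidean in each ball.

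First I would reduce to a local problem. Since $p\ge0$ is a supersolution of $-\Ln p\ge 0$, the maximum of $p$ over $D$ is attained at some $x_0$. Let $B=B_{r_0}(x_0)$ in coordinates. The difficulty is that ABP needs the boundary values of $p$ on $\partial(B\cap D)$, and on $\partial B\cap D$ these are not zero. So I would compare $p$ with a multiple of the torsion function. Set $\phi=p-s\tau_D$, with $s$ to be chosen. This gives $-\Ln\phi=q\one_E-s$, so $\phi$ is a subsolution away from $E$ once $s\ge 0$. I would instead use ABP directly in the form
\[
\sup_{B\cap D}p\le\sup_{\partial(B\cap D)}p+C\,\Bigl\|\tfrac{(q\one_E)^+}{\det(a)^{1/(2n)}}\Bigr\|_{L^{2n}(\Gamma^+)},
\]
where $a$ is the real coefficient matrix and $\Gamma^+$ is the upper contact set. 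The key observation is that the integrand vanishes outside $E$. On $E$, the condition that the coefficient matrix is at least $\vartheta\omega^{-1}$ gives $\det(a)^{1/(2n)}\ge c\vartheta$. Operators of complex Hessian type, $\sum a^{i\bar j}\partial_i\partial_{\bar j}$, are real elliptic with real determinant comparable to $\det(a^{i\bar j})^2$. Hence the integral is at most $C(\vartheta)\|q\|_{L^{2n}(E)}$, which produces the desired power of the volume.

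The main obstacle is controlling the boundary term $\sup_{\partial B\cap D}p$ by $T_D$, uniformly in the ellipticity outside $E$. For this I would use a quadratic barrier at scale $r_0$ together with $\tau_D$. Let $\zeta(y)=|y-x_0|^2/r_0^2$ in coordinates. Because the coefficient matrix has trace one and $\omega$ is comparable to the Euclidean metric, $\Ln\zeta\le C_0/r_0^2$. Consider
\[
\psi=p-\tfrac{\sup_Dp}{2}\zeta-C_0\tfrac{\sup_Dp}{2r_0^2}\,\tau_D .
\]
Then $-\Ln\psi\le q\one_E$ in $B\cap D$. On $\partial D$ we have $\psi\le 0$. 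On $\partial B\cap D$ we have $\psi\le\sup_D p-\sup_Dp/2=\sup_Dp/2$, which is not sufficient as stated. So I would rather shrink: choose the barrier coefficient to be $\sup_Dp$ so that $\psi\le0$ on $\partial B$. Applying ABP to $\psi$ on $B\cap D$ then gives
\[
\psi(x_0)=p(x_0)-C_0r_0^{-2}\sup_D p\,\tau_D(x_0)\le C\|q\|_{L^{2n}(E)}.
\]
This only closes if $C_0r_0^{-2}T_D<1$, which fails in general. I therefore expect the actual route to handle large $T_D$ by the splitting $p=p_1+p_2$, with $p_1$ the ABP solution on $E$-supported data and $p_2$ absorbed linearly, so that $\sup p\le C\|q\|_{L^{2n}(E)}+C T_D\,\sup|\Ln p_1|$-type terms. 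This yields the factor $1+T_D$. Making this iteration precise is the step I expect to be hardest. Everything else is the standard ABP estimate, using only that the data live on the well-elliptic set $E$.
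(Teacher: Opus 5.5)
There is a genuine gap, and it is at the step you flag. You localize at a maximum point of $p$ itself, so your quadratic barrier must have height $\sup_D p$ on $\partial B$ to beat $p$ there. The compensating term $C_0r_0^{-2}\sup_Dp\,\tau_D$ can then be as large as $C_0r_0^{-2}T_D\sup_Dp$ at $x_0$. As you note, this closes only when $T_D$ is small. The proposed splitting $p=p_1+p_2$ with $p_2$ ``absorbed linearly'' is never specified. It is also unclear that it can work: any piece of $p$ driven by data off $E$ is exactly what ABP cannot control without ellipticity there. So, as written, the proposal does not prove the lemma for general $T_D$.

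The missing idea is already in the function $\phi=p-s\tau_D$ that you set aside: take $s$ \emph{small}, and localize at a maximum of $\phi$, not of $p$.

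Let $v_E(x)=\int_EG_D(x,y)\,dV_\omega(y)$ (or use your smooth approximations), and put $\varepsilon=\|v_E\|_{L^\infty(D)}/(2(1+T_D))$. Since $0\le\tau_D\le T_D$, the number $S=\sup_D(v_E-\varepsilon\tau_D)$ is at least $\frac12\|v_E\|_{L^\infty(D)}$, so it is attained at an interior point $x_0$.

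On $D\cap B_r(x_0)$ consider $\varphi=v_E-\varepsilon\tau_D-S+a\varepsilon(r^2-|z-z(x_0)|^2)$. Here $a$ depends only on the charts and satisfies $a\Ln|z-z(x_0)|^2\le\frac12$ (by the trace-one normalization) and $ar^2\le\frac12$. Then:
\begin{itemize}
\item On $\partial B_r(x_0)$ we have $\varphi\le0$ simply by the definition of $S$. The sphere is handled by the maximality of $x_0$, not by a large barrier.
\item On $\partial D$ we have $\varphi\le-S+\varepsilon/2<0$.
\item $\Ln\varphi\ge-\one_E+\varepsilon/2$, so $(\Ln\varphi)^-$ is supported on $E$, where the determinant is controlled by $\vartheta$.
\end{itemize}
ABP then gives $a\varepsilon r^2=\varphi(x_0)\le C(\vartheta)\Vol_\omega(E)^{1/(2n)}$, and unwinding $\varepsilon$ yields \eqref{eq:greenlocalized}.

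The factor $1+T_D$ arises because the bump only needs height comparable to $\sup p/(1+T_D)$. That is exactly what makes the torsion correction $\varepsilon\tau_D\le\frac12\sup p$ affordable.
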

\begin{proof}
Consider the Green potential
\[
v_E(x)=\int_EG_D(x,y)\,dV_\omega(y).
\]
If it vanishes identically, there is nothing to prove. Otherwise, we set
\[
\varepsilon=\frac{\|v_E\|_{L^\infty(D)}}{2(1+T_D)}.
\]
Since $0\le\tau_D\le T_D$, we have
\[
\sup_D(v_E-\varepsilon\tau_D)
\ge\tfrac12\|v_E\|_{L^\infty(D)}>0.
\]
The maximum is therefore attained at an interior point $x_0$. Let $B_r(x_0)$ be a holomorphic coordinate ball of uniformly fixed radius, in one of a fixed finite collection of charts. Since the coefficient matrix has trace one, $\Ln|z-z(x_0)|^2$ is bounded from above, and we can choose $a>0$, depending only on the charts and the metric, such that
\[
a\Ln|z-z(x_0)|^2\le\tfrac12,
\qquad ar^2\le\tfrac12.
\]
On $D\cap B_r(x_0)$, we define
\[
\varphi=v_E-\varepsilon\tau_D
-\sup_D(v_E-\varepsilon\tau_D)
+a\varepsilon(r^2-|z-z(x_0)|^2).
\]
This function is nonpositive on the boundary of its domain. On the spherical part of the boundary, this follows from the definition of the supremum, while on the part lying in $\partial D$, it follows from the zero boundary values and the preceding lower bound for the supremum. At $x_0$, we have $\varphi(x_0)=a\varepsilon r^2$, while
\[
\Ln\varphi\ge-\one_E+\varepsilon/2\ge-\one_E.
\]
The Alexandrov estimate for real operators in dimension $2n$ \cite[Chapter 9]{GT} then gives
\[
a\varepsilon r^2\le\sup\varphi
\le C\Vol_\omega(E\cap B_r(x_0))^{1/(2n)}.
\]
The constant in this estimate is uniform. Indeed, the negative part of $\Ln\varphi$ is supported in $E$, and only there does the estimate involve the reciprocal of the determinant of the real coefficient matrix, which is controlled by the assumed lower bound on the coefficient matrix. Substituting the definition of $\varepsilon$, we obtain \eqref{eq:greenlocalized}.

The Alexandrov estimate applies equally to weak solutions of the Dirichlet problem with measurable right-hand side, as can also be seen by approximation. The assertion for the open sets used below then follows by a smooth exhaustion.
\end{proof}

Next, we estimate the volume of the sets where the trace is large; this step is valid for Hermitian metrics as well. Let $d\mu$ be the invariant measure of Lemma~\ref{lem:L1compact}, and choose $C_\mu$ with $dV_\omega\le C_\mu\,d\mu$. Since
\[
\tr_\omega\chi_u=\tr_\omega\chi+\Delta_\omega u>0,
\]
we have
\begin{equation}\label{eq:mass}
\begin{gathered}
\int_X\tr_\omega\chi_u\,d\mu
=\int_X\tr_\omega\chi\,d\mu,\\
\Vol_\omega\{\tr_\omega\chi_u>R\}
\le\frac{C_\mu}{R}\int_X\tr_\omega\chi\,d\mu.
\end{gathered}
\end{equation}
The integral on the right-hand side is positive as soon as the admissible family is nonempty. In the K\"ahler case, we may take $d\mu=dV_\omega$. The closedness of $\chi$ is not used in this calculation.

For $D\subset\{\tr_\omega\chi_u>R\}$ and
$E\subset D\cap\{\lambda_n<-\epsilon\tr_\omega\chi_u\}$, it follows from the ellipticity assumption \eqref{eq:badcone}, the bound \eqref{eq:torsionbound}, and Lemma~\ref{lem:greenlocalized} that
\begin{equation}\label{eq:greenvolume}
\sup_{x\in D}\int_EG_D(x,y)\,dV_\omega(y)
\le C_\epsilon R^{-1/(2n)}.
\end{equation}
Here and below, $R$ is assumed sufficiently large, depending on the fixed data and $\epsilon$.

\subsection{The localized gradient integral}
The main application of the Green's function estimates is the following consequence of uniform compactness, which controls an integral of the gradient without any pointwise bound on the gradient.

\begin{lemma}\label{lem:greenenergy}
For each fixed $\epsilon>0$, we have
\begin{equation}\label{eq:greenenergy}
\sup_{x\in D}\int_EG_D(x,y)|\partial w_u|_{\Ln}^2(y)\,dV_\omega(y)
\longrightarrow0\qquad(R\to\infty),
\end{equation}
uniformly over the family of solutions and over all
\[
D\subset\{\tr_\omega\chi_u>R\},\qquad
E\subset D\cap\{\lambda_n<-\epsilon\tr_\omega\chi_u\}.
\]
\end{lemma}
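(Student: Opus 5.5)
The plan is to compare $w_u$ with a nearby fixed smooth function, as in the proof of Theorem~\ref{thm:gradient}. The gradient of the comparison function is a fixed finite number. The gradient of the small difference is controlled through the Green representation of its square, where the sign structure gives a bound that does not depend on the comparison function.

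\emph{Choice of the comparison function.} Fix $\epsilon>0$ and a target $\varepsilon'>0$, and choose $\eta\in(0,1)$ small, depending only on $\varepsilon'$ and the data. By Lemma~\ref{lem:C0compact}, the normalized solutions admit a finite $\eta$-net $\{v_1,\dots,v_N\}$ in $C^0$, consisting of solutions themselves. These are smooth and admissible, and
\[
K:=\max_\ell\|\nabla(v_\ell-\ub)\|_{L^\infty}<\infty .
\]
For a given $u$, pick $v=v_\ell$ with $\|u-v\|_{L^\infty}\le\eta$, and set
\[
\varphi=u-v-\eta\in[-2\eta,0].
\]
Since $w_u$ differs from $u-\ub$ by a constant, we have $w_u=\varphi+(v-\ub)+\mathrm{const}$. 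The trace of the coefficient matrix of $\Ln$ is one, so $|\partial q|_{\Ln}^2\le|\nabla q|_\omega^2$, and therefore
\[
|\partial w_u|_{\Ln}^2\le2|\partial\varphi|_{\Ln}^2+2K^2 .
\]

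\emph{The difference term.} By \eqref{eq:linearized}, $\Ln\varphi=\mathcal T^{-1}L(u-v)\le h/\mathcal T\le H_0/\tau_0$. Compute
\[
\Ln(\varphi^2)=2\varphi\,\Ln\varphi+2|\partial\varphi|_{\Ln}^2 .
\]
The Green representation of $\varphi^2$ on $D$ gives
\[
\varphi^2(x)=\int_DG_D(x,y)\bigl(-\Ln\varphi^2\bigr)(y)\,dV_\omega(y)+\mathcal H(x),
\]
where $\mathcal H$ is the $\Ln$-harmonic extension of $\varphi^2|_{\partial D}$, so $0\le\mathcal H\le4\eta^2$. Since $-\varphi\ge0$ and $\Ln\varphi\le H_0/\tau_0$, we get $-2\varphi\,\Ln\varphi\le4\eta H_0/\tau_0$. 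Combining these with \eqref{eq:torsionbound}, we obtain
\[
2\int_DG_D(x,y)|\partial\varphi|_{\Ln}^2\,dV_\omega(y)\le4\eta^2+4\eta\frac{H_0}{\tau_0}\,\frac{C_{\mathrm{shift}}}{\kappa}.
\]
This bound is independent of $v$. We do not need the symmetry of $G_D$, and no Hessian bound on $v$ is needed, only admissibility, exactly as in the weight $\beta w^2$ of Section~\ref{sec:gradient}. For non-smooth $D$ we argue by smooth exhaustion, as in Lemma~\ref{lem:greenlocalized}.

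\emph{The comparison-function term.} By \eqref{eq:greenvolume},
\[
2K^2\int_EG_D\,dV_\omega\le2K^2C_\epsilon R^{-1/(2n)} .
\]
Altogether,
\[
\sup_{x\in D}\int_EG_D\,|\partial w_u|_{\Ln}^2\le C\bigl(\eta^2+\eta\bigr)+2K^2C_\epsilon R^{-1/(2n)} .
\]
Choose $\eta$ so that the first term is at most $\varepsilon'/2$. The net, and hence $K$, is then fixed, so taking $R$ large makes the second term at most $\varepsilon'/2$, uniformly over the family.

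The main obstacle is the order of quantifiers. $K$ depends on the net, hence on $\eta$, so the $\eta$-dependent term must be bounded independently of $v$. This is why I would use the one-sided bound $\Ln(u-v)\le h/\mathcal T$ together with the sign of $\varphi$, rather than a two-sided bound on $\Ln(u-v)$, which would involve $A_v$. One must also check that the harmonic-extension term in the Green identity has the right sign and size, which is why $\varphi$ is shifted to be nonpositive.
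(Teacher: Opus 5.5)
Your proposal is correct and follows essentially the same argument as the paper. You compare $u$ with a nearby fixed solution $v$, use the Green representation of $(u-v-\eta)^2$ together with \eqref{eq:torsionbound}, and control the $\nabla(v-\ub)$ part by \eqref{eq:greenvolume}. The differences are cosmetic. You use a finite $\eta$-net of solutions instead of the paper's contradiction argument along a uniformly Cauchy subsequence. You also bound $h/\mathcal T$ by $H_0/\tau_0$ rather than by $H_0\varepsilon(R)$, which gives an $O(\eta)$ error term instead of $O(\eta\,\varepsilon(R))$; this is harmless because $\eta$ is fixed small before $R$ is chosen.
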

\begin{proof}
Let $v$ be a smooth admissible function with $\|u-v\|_{L^\infty}\le\eta$. By concavity,
\[
\Ln(u-v)\le\frac{h-F(A_v)}{\mathcal T}
\le H_0\varepsilon(R)\quad\hbox{on }D.
\]
Since $-2\eta\le u-v-\eta\le0$, it follows that
\[
\Ln\bigl((u-v-\eta)^2\bigr)
\ge-4\eta H_0\varepsilon(R)
+2|\partial(u-v)|_{\Ln}^2.
\]
The Green representation formula allows us to compare $(u-v-\eta)^2$ with its $\Ln$-harmonic extension from the boundary. Since both functions take values between $0$ and $4\eta^2$, we obtain
\[
\int_DG_D(x,y)\Ln\bigl((u-v-\eta)^2\bigr)(y)\,dV_\omega(y)
\le4\eta^2.
\]
Combining these inequalities with \eqref{eq:torsionbound}, and then using
$\partial w_u=\partial(u-v)+\partial(v-\ub)$, we find
\begin{equation}\label{eq:energysplit}
\begin{aligned}
\sup_{x\in D}\int_EG_D(x,y)|\partial w_u|_{\Ln}^2(y)\,dV_\omega(y)
\le{}&4\eta^2+
\frac{4\eta H_0C_{\mathrm{shift}}}{\kappa}\varepsilon(R)\\
&+C_\epsilon\|\nabla(v-\ub)\|_{L^\infty}^2R^{-1/(2n)}.
\end{aligned}
\end{equation}
For the last term, we have used \eqref{eq:greenvolume} and the trace normalization, which bounds $|\partial(v-\ub)|_{\Ln}^2$ by $|\nabla(v-\ub)|_\omega^2$.

Suppose now that the lemma is false. Then there exist $c_*>0$, numbers $R_j\to\infty$, solutions $u_j$, and sets $D_j,E_j$ for which the left-hand side of \eqref{eq:energysplit} is at least $c_*$. By Lemma~\ref{lem:C0compact}, we may pass to a uniformly Cauchy subsequence. We choose $\eta>0$ with $4\eta^2<c_*/2$, and then fix an index $N$ such that
\[
\|u_j-u_N\|_{L^\infty}\le\eta\qquad(j\ge N).
\]
We apply \eqref{eq:energysplit} with $v=u_N$, whose gradient is fixed, while $\varepsilon(R_j)$ and $R_j^{-1/(2n)}$ tend to zero. Letting $j\to\infty$, we obtain $c_*\le4\eta^2$, which is a contradiction. Note that the argument is applied to the Green's function of each solution separately, and does not require any convergence of the operators or of their kernels.
\end{proof}

\subsection{Proof of the Hessian estimate}\label{sec:greenmaximum}
\begin{proof}[Proof of Theorem~\ref{thm:kahler}]
Let $C\ge1$ be a constant dominating the constants in Lemma~\ref{lem:eigenvalue}, where the term $C/\lambda_1$ has been absorbed into $C$ on $\{\lambda_1\ge1\}$. We choose
\begin{equation}\label{eq:alphachoice}
0<\alpha\le\min\left\{\tfrac12,
\frac{\kappa}{32CC_{\mathrm{shift}}}\right\},
\qquad a_0=\frac{4C\alpha}{\kappa},
\end{equation}
and use the refined differential inequality with $\beta=\alpha$. We write $\delta=\delta_\alpha$ for the corresponding cone parameter.

Let $M=\sup_X\lambda_1$, and assume that $M$ is large. We work on the set
\[
D=\left\{x\in X:\left(\frac{\lambda_1(x)}{M}\right)^\alpha>\frac14\right\}
\]
with the test function
\[
\Phi=\left(\frac{\lambda_1}{M}\right)^\alpha-a_0w_u.
\]
By \eqref{eq:tracecone}, the trace tends to infinity uniformly on $D$ as $M\to\infty$. Moreover, the set $D$ is proper for large $M$, since at a maximum point of $u-\ub$, we have $A_u\le A_{\ub}$, so that $\lambda_1$ is bounded there in terms of the fixed subsolution.

Let $\rho\in[0,1]$ be a continuous cutoff function of $-\lambda_n/\lambda_1$, which vanishes when this ratio is at most $\delta/2$ and is equal to one when it is at least $\delta$. We use the refined inequality where $\rho<1$, and the global inequality elsewhere. The identity
\[
\Ln\left(\frac{\lambda_1}{M}\right)^\alpha
=\alpha\left(\frac{\lambda_1}{M}\right)^\alpha\Ln\log\lambda_1
+\frac{\left|\partial(\lambda_1/M)^\alpha\right|_{\Ln}^2}
{(\lambda_1/M)^\alpha}
\]
then gives
\begin{equation}\label{eq:Lt}
\Ln\left(\frac{\lambda_1}{M}\right)^\alpha
\ge-C\alpha
-\rho\frac{1-\alpha}{\alpha}
\frac{\left|\partial(\lambda_1/M)^\alpha\right|_{\Ln}^2}
{(\lambda_1/M)^\alpha}.
\end{equation}
Since $(\lambda_1/M)^\alpha>1/4$ on $D$ and
$\partial(\lambda_1/M)^\alpha=\partial\Phi+a_0\partial w_u$, it follows from \eqref{eq:highbarrier} and \eqref{eq:alphachoice} that
\[
\Ln\Phi\ge C\alpha
-\frac8\alpha\rho|\partial\Phi|_{\Ln}^2
-\frac{8a_0^2}{\alpha}\rho|\partial w_u|_{\Ln}^2.
\]
To absorb the term involving $\partial\Phi$, we take an exponential:
\begin{equation}\label{eq:LZ}
\Ln e^{8\Phi/\alpha}
\ge-\frac{64a_0^2}{\alpha^2}
\rho|\partial w_u|_{\Ln}^2e^{8\Phi/\alpha}.
\end{equation}
The remaining coefficient is supported in the region where
\[
\lambda_n<-\frac{\delta}{2}\lambda_1
\le-\frac{\delta}{2n}\tr_\omega\chi_u.
\]
Therefore, by Lemma~\ref{lem:greenenergy}, we have, for $M$ sufficiently large,
\begin{equation}\label{eq:smallpotential}
\frac{64a_0^2}{\alpha^2}
\sup_{x\in D}\int_DG_D(x,y)
\rho(y)|\partial w_u|_{\Ln}^2(y)\,dV_\omega(y)
\le\tfrac12.
\end{equation}

In order to apply the maximum principle, we subtract the Green potential of the right-hand side of \eqref{eq:LZ}, which is given by
\[
\Psi(x)=\frac{64a_0^2}{\alpha^2}
\int_DG_D(x,y)\rho(y)|\partial w_u|_{\Ln}^2(y)
e^{8\Phi(y)/\alpha}\,dV_\omega(y).
\]
Then $\Psi$ has zero boundary values, and $\Ln(e^{8\Phi/\alpha}-\Psi)\ge0$. Hence, in view of \eqref{eq:smallpotential},
\[
\sup_De^{8\Phi/\alpha}
\le\sup_{\partial D}e^{8\Phi/\alpha}
+\frac12\sup_De^{8\Phi/\alpha},
\]
and therefore
\begin{equation}\label{eq:Zmax}
\sup_De^{8\Phi/\alpha}
\le2\sup_{\partial D}e^{8\Phi/\alpha}.
\end{equation}
If the boundary is not smooth, we apply the argument on a smooth exhaustion of $D$ and pass to the limit; the estimates are uniform, and by continuity we obtain the boundary supremum appearing above. At points where the largest eigenvalue is multiple, we use the formulation of Lemma~\ref{lem:eigenvalue} in terms of upper test functions, together with the maximum principle for the fixed smooth operator.

At a point where $\lambda_1=M$, we have $\Phi=1-a_0w_u\ge1$. On the other hand, on $\partial D$, the choice of $\alpha$ gives $a_0C_{\mathrm{shift}}\le1/8$, and hence
\[
\Phi=\tfrac14-a_0w_u
\le\tfrac14+a_0C_{\mathrm{shift}}\le\tfrac12.
\]
Thus \eqref{eq:Zmax} would imply $e^{8/\alpha}\le2e^{4/\alpha}$, which is impossible for $\alpha\le1/2$. It follows that $M$ is uniformly bounded, and the inequalities \eqref{eq:tracecone} then bound all the eigenvalues, which proves \eqref{eq:kahler}.

Finally, because of the positive lower bound for $h$, these bounded eigenvalues remain in a compact subset of $\Gamma$. The stated $C^{2,\alpha_0}$ bound then follows from uniform ellipticity, the complex Evans-Krylov estimate, and the Schauder estimates for $\Delta_\omega$, exactly as in the proof of Corollary~\ref{cor:hermitian}.
\end{proof}

\end{document}